\DeclareMathOperator{\Ker}{ker}
\DeclareMathOperator{\Sp}{Spec}
\DeclareMathOperator{\im}{im}
\DeclareMathOperator{\Hom}{Hom}
\DeclareMathOperator{\Pic}{Pic}
\DeclareMathOperator{\Char}{char}
\DeclareMathOperator{\gl}{gl}
\DeclareMathOperator{\ind}{ind}
\DeclareMathOperator{\Nrd}{Nrd}
\DeclareMathOperator{\GL}{GL}
\DeclareMathOperator{\GO}{GO}
\DeclareMathOperator{\Or}{O}
\DeclareMathOperator{\cd}{cd}
\DeclareMathOperator{\nr}{nr}
\DeclareMathOperator{\ram}{Ram}
\DeclareMathOperator{\Supp}{Supp}
\DeclareMathOperator{\Div}{div}
\DeclareMathOperator{\cont}{cont}
\DeclareMathOperator{\Inf}{Inf}
\DeclareMathOperator{\Ht}{ht}
\DeclareMathOperator{\br}{Br}
\DeclareMathOperator{\Res}{Res}
\newcommand{\comm}[1]{}
\newcommand{\Lrightarrow}{\hbox to1cm{\rightarrowfill}}
\newcommand{\Ldownarrow}{\bigg\downarrow}
\theoremstyle{plain} 
\newtheorem{theorem}{Theorem}[section]
\newtheorem{corollary}[theorem]{Corollary}
\newtheorem{lemma}[theorem]{Lemma}
\newtheorem{prop}[theorem]{Proposition}
\newtheorem{defn}[theorem]{Definition}
\begin{document}
\title[Galois Cohomology of Function Fields]{Galois cohomology of function fields of curves over non-archimedean local fields}
\author{Saurabh Gosavi}
\email{saurabh.gosavi2@gmail.com}
\address{Department of Mathematics \\ Bar-Ilan University \\ Ramat-Gan \\ 5290000 \\ Israel.
}

\maketitle

\begin{abstract}
Let $F$ be the function field of a curve over a non-archimedean local field. Let $m \geq  2$ be an integer coprime to the characteristic of the residue field of the local field. In this article, we show that every element in $H^{3}(F, \mu_{m}^{\otimes 2})$ is of the form $\chi \cup (f) \cup (g)$, where $\chi$ is in $H^{1}(F, \mathbb{Z}/m\mathbb{Z})$ and $(f)$, $(g)$ in $H^{1}(F, \mu_{m})$. This extends a result of Parimala and Suresh (\cite{PS10}), where they show this when $m$ is prime and when $F$ contains $\mu_{m}$.
\end{abstract}

\section{Introduction}
Let $F$ be a field. Recall that when $m$ is coprime to $\Char{F}$, the $m$-torsion subgroup ${}_{m}\br(F)$ of the Brauer group of $F$ can be identified with the Galois cohomology group $H^{2}(F, \mu_{m})$.
When $F$ is a global field or a local field, it is well known that every central simple algebra over $F$ is cyclic. More precisely, one can show that for every $\alpha$ in ${}_{m}\br(F)$, there exists a character $\chi$ in $H^{1}(F, \mathbb{Z}/m\mathbb{Z})$ and an $(f)$ in $H^{1}(F, \mu_{m})$ such that $\alpha = \chi \cup (f)$. 

In this article, we are interested in a ``higher dimensional" analogue of the above classical result. Let $F$ be the function field of a curve over a non-archimedean local field. The degree three Galois cohomology group $H^{3}(F, \mu_{m}^{\otimes 2})$ plays an important role in understanding the arithmetic of such fields. These groups also receive cohomological invariants of principal homogenous spaces under absolutely simple, simply connected linear algerbaic groups, and for some of these algebraic groups, these invariants are known to be classifying (see for eg.\cite[Theorem 5.3]{CPS12}).

A natural question one might ask is whether one can say something about the shape of elements in $H^{3}(F, \mu_{m}^{\otimes 2})$. This has been tackled before by Parimala and Suresh in \cite{PS10}. Let $m$ be a prime not equal to the characteristic of the residue field of the underlying local field. Assume that $F$ contains $\mu_{m}$. They show that (see \cite[Theorem 3.5]{PS10}) every element in $H^{3}(F, \mu_{m})$ is a symbol (cup product of three elements in $H^{1}(F, \mu_{m})$). This fact is then used to show that every quadratic form with at least $9$ variables represents zero non trivially. 

 Their proof however does not seem to generalize to all integers $m$. A crucial ingredient in their proof is a certain local-global principle for divisibility of classes in $H^{3}(F, \mu_{m}^{\otimes 2})$ by those in $H^{2}(F, \mu_{m})$, and some of their technical arguments need $m$ to be a prime.

When $m$ is not necessarily a prime, and without the assumption that $F$ contains $\mu_{m}$, we show that (Corollary \ref{DIVISIBILITY}) this local-global principle holds more generally for semi-global fields (see Definition \ref{SEMI-GLOBAL}) in certain situations. As a consequence of this, we obtain the following theorem:
\begin{theorem}
	\label{MAIN-THM2}
	Let $F$ be the function field of a curve over a non-archimedean local field $K$, and $m$ be a positive integer coprime to the characteristic of the residue field of $K$. For every $\zeta$ in $H^{3}(F, \mu_{m}^{\otimes 2})$, there exists a character $\chi$ in $H^{1}(F, \mathbb{Z}/m\mathbb{Z})$, and classes $(f)$ and $(g)$ in $H^{1}(F, \mu_{m})$ such that $\zeta = \chi \cup (f) \cup (g)$.
\end{theorem}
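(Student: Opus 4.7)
The plan is to derive Theorem \ref{MAIN-THM2} as a fairly direct consequence of Corollary \ref{DIVISIBILITY}, combined with the cyclicity of Brauer classes over function fields of $p$-adic curves. The argument has two conceptual steps.

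First, given $\zeta \in H^{3}(F, \mu_{m}^{\otimes 2})$, I would invoke Corollary \ref{DIVISIBILITY} to produce $(g) \in H^{1}(F, \mu_{m})$ and $\alpha \in H^{2}(F, \mu_{m})$ with $\zeta = \alpha \cup (g)$. Following the blueprint of \cite[\S3]{PS10}, one chooses a regular proper model $\mathcal{X} \to \Sp \mathcal{O}_{K}$ of the curve underlying $F$ and a function $g \in F^{*}$ whose divisor is adapted to the ramification locus of $\zeta$. Because the local-global principle of Corollary \ref{DIVISIBILITY} does not require $\mu_{m} \subset F$ and holds for arbitrary $m$, one can then verify divisibility of $\zeta$ by $(g)$ at every completion $F_{v}$ of $F$ (where the cohomological dimension drops, making divisibility comparatively tractable) and promote this to the sought-after global divisibility $\zeta = \alpha \cup (g)$ over $F$.

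Second, with $\alpha \in H^{2}(F, \mu_{m}) = {}_{m}\br(F)$ in hand, it remains to show that $\alpha$ is cyclic, i.e.\ $\alpha = \chi \cup (f)$ for some $\chi \in H^{1}(F, \mathbb{Z}/m\mathbb{Z})$ and $(f) \in H^{1}(F, \mu_{m})$. Over the function field of a curve over a non-archimedean local field, with $m$ coprime to the residue characteristic, this is known: when $m$ is prime it is due to Saltman, and for general $m$ it can be obtained via the patching methods of Harbater--Hartmann--Krashen, with no assumption on roots of unity required. Substituting gives $\zeta = \chi \cup (f) \cup (g)$, which is the conclusion of the theorem.

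The essential obstacle lies not in this deduction but in Corollary \ref{DIVISIBILITY} itself: removing both the ``$m$ prime'' and ``$\mu_{m} \subset F$'' hypotheses forces one to work with semi-global fields in genuine generality and to handle the twist in $\mu_{m}^{\otimes 2}$-coefficients carefully. Once that corollary is granted, Theorem \ref{MAIN-THM2} becomes a clean assembly of the divisibility statement with standard cyclicity results, with no further deep input needed.
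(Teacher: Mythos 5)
Your proposal inverts the logic of Corollary \ref{DIVISIBILITY} in a way that breaks the first step. That corollary is a local-global principle for divisibility of $\zeta$ by a \emph{fixed, globally defined} class $\alpha\in H^{2}(F,\mu_{m})$: the hypothesis is that $\zeta=\alpha\cup(f_{v})$ locally for the same $\alpha$ at every $v$, and the output is the $H^{1}$-factor $f$. It does not allow you to fix $(g)\in H^{1}(F,\mu_{m})$ first, check local divisibility of $\zeta$ by $(g)$ with varying local quotients $\alpha_{v}\in H^{2}(F_{v},\mu_{m})$, and then produce a global $\alpha$; nothing in the patching machinery of Propositions \ref{LGP-WRT-PATCHES1}--\ref{LGP-WRT-PATCHES2} glues the local degree-two quotients, since the surjectivity of the factorization map $\phi$ there is proved for the kernel of $\alpha\cup\cdot$ acting on $H^{1}$ via rationality of $\GL_{1}(D)$, with $\alpha$ given in advance. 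So the corollary cannot "produce $\alpha$" for you; $\alpha$ must be constructed by hand before the corollary is invoked.

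The second step rests on a false cyclicity claim. It is not true that every class in ${}_{m}\br(F)$ over the function field of a $p$-adic curve is of the form $\chi\cup(f)$ with $\chi\in H^{1}(F,\mathbb{Z}/m\mathbb{Z})$: such a class has index dividing $m$, whereas Saltman's period-index theorem $\ind\mid\mathrm{per}^{2}$ is sharp for these fields, so there exist classes of period $\ell$ and index $\ell^{2}$ which admit no such presentation. Saltman's cyclicity theorem applies to division algebras of prime \emph{degree} $\ell\neq p$, not to arbitrary classes of period $\ell$, and no general-$m$ cyclicity statement of the kind you cite is available. This is precisely why the paper does not split the problem as you do: it first builds a \emph{cyclic} class $\alpha=(\chi,f)$ adapted to $\ram_{\mathscr{X}}(\zeta)$ --- choosing the $\pi_{i}$ cutting out the components $C_{i}$ of the ramification divisor, using Grunwald--Wang over the residue fields $k(C_{i})$ to find characters $\chi_{i}$ splitting the residues $\partial_{C_{i}}(\zeta)$, and lifting to a global $\chi$ --- then verifies $\zeta\otimes F_{v}=(\chi,f)\cup(g_{v})$ by a case analysis on the center of $v$ (using Theorem \ref{SAKAGAITO}, Lemma \ref{WELL-KNOWN-LEMMA} and the Witt exact sequence), and only then applies Corollary \ref{DIVISIBILITY} with this fixed $\alpha$. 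The local verification and the construction of $\chi,f$ are the substance of the proof; they cannot be outsourced to a generic cyclicity result.
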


The strategy of our proof of the local-global principle for divisibility (Corollary \ref{DIVISIBILITY}) is different from that adopted in \cite{PS10} or \cite{PS15}. We prove it using the field patching technique of Harbater, Hartmann and Krashen. This technique has been used to resolve a number of arithmetic problems over semi-global fields. The fields we are interested in, i.e., function fields of curves over non-archimedean local fields are examples of semi-global fields. 

In Proposition \ref{LGP-WRT-PATCHES1}, we show that the local-global principle for divisibility holds with respect to overfields of $F$ coming from a normal model of $F$. Using this key observation and standard techniques, we obtain a local-global principle with respect to discrete valuations in Theorem \ref{LGP-WRT-DVR} and its Corollary \ref{DIVISIBILITY}. Along the way, we also show that a local-global principle for divisibility by symbols also holds for higher cohomology classes with $\mu_{2}$ coefficients. Adapting the proof of \cite[Theorem 5.2]{PS15} to our situation then enables us to obtain Theorem \ref{MAIN-THM2}. 
\section{Acknowledgements}
I thank Prof. Daniel Krashen, Prof. Eliyahu Matzri and  Prof. V. Suresh for enlightening conversations and feedback on the writing. This research was supported by the Israel Science Foundation (grant no. 630/17).

\section{Preliminaries}
\subsection{Galois Cohomology}
\label{GALOIS-COHOMOLOGY}
Let $F$ be a field and $m \geq 2$ an integer coprime to $\Char{F}$. Let $\mu_{m}$ be the group of $m^{th}$ roots of unity. For $j \geq 0$, the group $H^{i}(F, \mu_{m}^{\otimes j})$ is the degree $i$ Galois cohomology groups of $F$ with coefficients in $\mu_{m}^{\otimes j}$. Recall that by Kummer theory, one has the isomorphism $H^{1}(F, \mu_{m}) \cong F^{\times}/(F^{\times})^{m}$. Every class in $H^{1}(F, \mu_{m})$ is of the form $\sigma \mapsto \sigma(\sqrt[m]{f})/f$. Under the isomorphism, this class maps to the $m^{th}$-power class $(f)$ in $F^{\times}/(F^{\times})^{m}$. We will make no destinction between the classes $\sigma \mapsto \sigma(\sqrt[m]{f})/f$ and $(f)$. 

The group $H^{1}(F, \mathbb{Z}/m\mathbb{Z})$ where the absolute Galois group $\Gamma_{F}$ of $F$ acts trivially is the group of characters of the absolute Galois group, i.e., $H^{1}(F, \mathbb{Z}/m\mathbb{Z}) \cong\Hom_{\cont}(\Gamma_{F},\mathbb{Z}/m\mathbb{Z})$. 

Observe that when $\mu_{m} \subset F$, for every $j$, we have the following isomorphisms of Galois modules $\mu_{m}^{\otimes j} \cong \mathbb{Z}/m\mathbb{Z}$. For any field extension $L/F$, we have the restriction map
\[ \Res_{L/F}: H^{i}(F, \mu_{m}^{\otimes j}) \rightarrow H^{i}(L, \mu_{m}^{\otimes j}).\]
We will denote the image of a class $\alpha$ under $\Res_{L/F}$ by $\alpha \otimes L$.

Let $v$ be a discrete valuation on $F$ with residue field $k(v)$. Let $F_{v}$ denote the completion of $F$ at $v$. We assume that $\Char(k(v))$ is coprime to $m$. In this setting, we have the following residue homomorphism (see \cite[Chapter II]{GMS04} for the construction):
\[ \partial_{v}: H^{i}(F, \mu_{m}^{\otimes j}) \rightarrow H^{i-1}(k(v), \mu_{m}^{\otimes j-1}).  \]
Note that $\partial_{v}$ factors through the restriction map $H^{i}(F, \mu_{m}^{\otimes j}) \rightarrow H^{i}(F_{v}, \mu_{m}^{\otimes j})$. 
For the complete discretely valued field $F_{v}$, one has the following exact sequence (see \cite[Proposition 7.7]{GMS04}):

\begin{equation} 
	\label{WITT-EXACT-SEQUENCE}
	0 \rightarrow H^{i}(k(v), \mu_{m}^{\otimes j}) \overset{\Inf}\longrightarrow H^{i}(F_{v}, \mu_{m}^{\otimes j}) \overset{\partial_{v}}{\longrightarrow} H^{i-1}(k(v), \mu_{m}^{\otimes j-1}) \rightarrow 0. 
\end{equation}

Once we fix a parameter $\pi_{v}$ of $F_{v}$, we have a splitting $s_{\pi_{v}}: H^{i-1}(k(v), \mu_{m}^{\otimes j-1}) \rightarrow H^{i}(F_{v}, \mu_{m}^{\otimes j})$ of $\partial_{v}$ defined as $s_{\pi_{v}}(\beta) = \Inf(\beta)\cup(\pi_{v})$. Thus we obtain the following noncanonical decomposition:
\begin{equation} 
\label{SPLIT-DECOMP-WITT}
	H^{i}(F_{v}, \mu_{m}^{\otimes j}) \cong H^{i}(k(v), \mu_{m}^{\otimes j}) \oplus H^{i-1}(k(v), \mu_{m}^{\otimes j-1}).
\end{equation}
We will sometimes refer to this decomposition as the \emph{Witt Decomposition} and to the exact sequence \eqref{WITT-EXACT-SEQUENCE} as the \emph{Witt Exact Sequence}.

Let $\alpha$ be a class in $H^{i}(F_{v}, \mu_{m}^{\otimes j})$. Consider $\alpha_{1} := \Inf(\partial_{v}(\alpha))$, the image of $\partial_{v}(\alpha)$ under $\Inf: H^{i-1}(k(v), \mu_{m}^{\otimes j-1}) \rightarrow H^{i-1}(F_{v}, \mu_{m}^{\otimes j-1})$.
Once we fix a parameter $\pi_{v}$, for every $\alpha$ in $H^{i}(F_{v}, \mu_{m}^{\otimes j})$, by the isomorphism \eqref{SPLIT-DECOMP-WITT}, we can write 
\[
\alpha = \alpha_{0} + \alpha_{1} \cup (\pi_{v}),
\] for some $\alpha_{0}$ in the image of the inflation map $\Inf: H^{i}(k(v), \mu_{m}^{\otimes j}) \rightarrow H^{i}(F_{v}, \mu_{m}^{\otimes j})$.

Now suppose that $F$ is the function field of a normal scheme $\mathscr{X}$. Each codimension one point $x$ defines a discrete valuation $v_{x}$ of $F$. Assume that the characteristic of the residue field $k(x)$ of each codimension one point $x$ is coprime to $m$. Let $\zeta$ be a class in $H^{i}(F, \mu_{m}^{\otimes j})$. Recall that the \textbf{ramification divisor} $\ram_{\mathscr{X}}(\zeta)$ of $\zeta$ is defined as the sum of the prime divisors $\overline{\{x\}}$ such that $\partial_{v_{x}}(\zeta) \neq 0$.
\subsection{Field Patching}
\label{FIELD-PATCHING}
The field patching technique was introduced by Harbater and Hartmann in \cite{HH10} and was subsequently refined in a series of papers by Harbater, Hartmann and Krashen with a host of applications. In \cite{HHK15(1)}, the authors provide a conceptual framework for local-global principles to hold for principal homogenous spaces under connected, rational linear algebraic groups over the so called semi-global fields. The authors in \cite{HHK15(2)} also consider local-global principles for Galois cohomology groups.

\begin{defn}
	\label{SEMI-GLOBAL}
	Let $K$ be a complete discretely valued and $X/K$ be a geometrically integral curve. The function field $F$ of $X/K$ is called a semi-global field. 
\end{defn}
Examples of semi-global field include function fields of curves over non-archimedean local fields, the fields which are of interest to us in this article.

Let $R$ be a complete discretely valued field with fraction field $K$ and residue field $k$. Let $X/K$ be a geometrically integral curve and $F$ be the function field of $X/K$. Note that there always exists a normal projective model $\mathscr{X}/\Sp{R}$ of $F$. Thus $\mathscr{X}/\Sp{R}$ is a flat projective curve of relative dimension one. The fiber $\mathscr{X}_{k}$ over the maximal ideal of $\Sp{R}$ is called the special fiber or the closed fiber.

We will briefly introduce the field-patching set-up. We refer the reader to \cite{HH10} and \cite{HHK09} for details. Let $\mathcal{P}$ be a finite set of closed points in $\mathscr{X}_{k}$, which includes the points where distinct irreducible components of $\mathscr{X}_{k}$ meet. We also assume that $\mathcal{P}$ contains at least one point on every irreducible component of $\mathscr{X}_{k}$. Let $\mathcal{U}$ be the set of irreducible components of $\mathscr{X}_{k} \setminus \mathcal{P}$. For each $\xi$ in $\mathcal{P} \cup \mathcal{U}$, we associate overfields $F_{\xi}$ of $F$. We will define these fields now.

Let $U$ be in $\mathcal{U}$. Let $R_{U}$ be the subring of $F$ consisting of rational functions that are regular on $U$. Any parameter $t$ of the underlying complete discretely valued ring $R$ is regular on $U$ and thus $t$ lies in $R_{U}$. Let $\widehat{R_{U}}$ be the $t$-adic completion of $R_{U}$. Since $R_{U}$ is an excellent normal domain, so is $\widehat{R_{U}}$. Let $F_{U}$ be the fraction field of $\widehat{R_{U}}$.

For $P$ in $\mathcal{P}$, let $R_{P}$ be the local ring of $\mathscr{X}$ at $P$ with maximal ideal $\mathfrak{m}_{P}$. Let $\widehat{R_{P}}$ be the $\mathfrak{m}_{P}$-adic completion of $R_{P}$. Again, $\widehat{R_{P}}$ is a noraml domain since $R_{P}$ is excellent. Let $F_{P}$ be the fraction field of $\widehat{R_{P}}$.

 Let $P$ be in $\mathcal{P}$. Let $\wp$ be a height one prime ideal of $\widehat{R_{P}}$ containing the parameter $t$ of $R$. Note that the contraction of $\wp$ to the local ring $R_{P}$ determines an irreducible component of $\mathscr{X}_{k}$ in $\mathcal{U}$. Such primes $\wp$ will be called \textbf{branches} at $P$ lying on $U$. Note that the localization $(\widehat{R_{P}})_{\wp}$ is a discrete valuation ring. Let $\widehat{R_{\wp}}$ be the completion of this discrete valaution ring with respect to its maximal ideal. Let $F_{\wp}$ be the fraction field of $\widehat{R_{\wp}}$. Note that $F_{\wp}$ is a complete discretely valued field.
It turns out that for all triples $(U, P, \wp)$ with $P \in \overline{U}$ and $\wp$ a branch incident at $P$ and lying on $U$, the field $F_{\wp}$ is an overfield of both $F_{U}$ and $F_{P}$. We denote the set of all branches associated to all pairs $(U, P)$ with $P$ in $\overline{U}$ by $\mathcal{B}$.

The field patching technique enables us to prove two kinds of statements for certain algebraic structures (central simple algebras, quadratic forms etc.) and Galois cohomology classes. First, we can show that whenever there are algebraic structures $\alpha_{U}$ (or Galois cohomology classes) defined over each $F_{U}$ for $U$ in $\mathcal{U}$ and $\alpha_{P}$ over each $F_{P}$ in $\mathcal{P}$ which are compatible in the sense that for each branch $\wp$ incident at $P$ and lying over $U$, we have $\alpha_{U} \otimes_{F_{U}}F_{\wp} = \alpha_{P} \otimes_{F_{P}} F_{\wp}$, then they give rise to a global structure $\alpha$ over $F$ inducing $\alpha_{U}$ and $\alpha_{P}$. Second, one can also show that sometimes, this $\alpha$ is unique. These statements for Galois cohomology classes are pithily encapsulated in the following ``Mayer-Vietoris sequence" (see \cite[Theorem 3.1.5, Corollary 3.1.6]{HHK15(2)}):

Let $m > 1$ be an integer coprime to the characteristic of the residue field $k$ of the semi-global field $F$. Then for $n > 1$, we have the following exact sequence:
\[ 0 \rightarrow H^{n}(F, \mu_{m}^{\otimes n-1}) \rightarrow \prod_{\xi \in \mathcal{P}\cup \mathcal{U}}H^{n}(F_{\xi}, \mu_{m}^{\otimes n-1}) \rightarrow \prod_{\wp \in \mathcal{B}}H^{n}(F_{\wp}, \mu_{m}^{\otimes n-1}) \rightarrow 0.  \]

We will use this sequence to obtain a local global principle for divisibility with respect to the overfields $F_{\xi}$. (see Propositions \ref{LGP-WRT-PATCHES1} and \ref{LGP-WRT-PATCHES2}).

\section{Local-Global principle with respect to patches.}

Let $\alpha$ be in $\br(F)$. Let $A/F$ be a central simple algebra in the class of $\alpha$. Recall that if $B/F$ is another central simple algebra Brauer equivalent to $A$, then $\Nrd(A^{\times}) = \Nrd(B^{\times})$. We will denote this group by $\Nrd(\alpha)$.

We need the following terminology which we borrow from \cite{HW20}:

For $m \geq 1$, we say that a field $F$ is Rost $m$-divisible if for every $\alpha$ in ${}_{m}\br(F)$ the following holds:

For $x$ in $F^{\times}$,
\[ \alpha \cup (x) = 0 \iff x \in \prod_{d|m}[\Nrd(d\alpha)]^{d}. \]

The authors in \cite{HW20} show that if $F$ is a Henselian discretely valued field with residue field $k$ then in some cases $F$ is Rost $m$-divisible when $m$ is a prime not equal to $\Char{k}$.

From now on, we will fix some notation. Unless stated otherwise, $F$ will denote a semi-global field, i.e., $F$ is the function field of a curve over a complete discretely valued field $K$. The residue field of $K$ will be denoted by $k$. We will sometimes abusively say that $k$ is the residue field of the semi-global field $F$. 
\begin{prop}
\label{LGP-WRT-PATCHES1}
Let $F$ be a semi-global field, and let $\alpha$ be an element in $H^{2}(F, \mu_{m})$ and $\zeta$ be in $H^{3}(F, \mu_{m}^{\otimes 2})$ for some $m > 1$ coprime to $\Char(k)$. Suppose that either every branch field $F_{\wp}$ is a Rost $m$-divisible field for every $\wp$ in $\mathcal{B}$, or $\ind(\alpha)$ is square-free. If for every $\xi$ in $\mathcal{P} \cup \mathcal{U}$, there exist elements $(f_{\xi})$ in $H^{1}(F_{\xi}, \mu_{m})$ such that $\zeta = \alpha \cup (f_{\xi})$, then there exists an element $f$ in $F^{\times}$ such that $\zeta = \alpha \cup (f)$.
\end{prop}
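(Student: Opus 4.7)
The plan is to modify each $(f_\xi) \in H^{1}(F_\xi, \mu_m)$ by a correction $(h_\xi)$ lying in the kernel of $\cup\alpha$, so that the modified family $(f_\xi h_\xi)$ becomes pairwise equal on every branch $\wp$ (as elements of $F_\wp^\times$, not merely modulo $m$-th powers), and then patch to a global $f \in F^\times$ via the $\mathbb{G}_m$-patching of Harbater--Hartmann--Krashen. The key observation driving the construction is that on each branch $\wp$ of $U \in \mathcal{U}$ incident at $P \in \mathcal{P}$, the identity $\alpha \cup (f_U) = \zeta|_{F_\wp} = \alpha \cup (f_P)$ in $H^{3}(F_\wp, \mu_m^{\otimes 2})$ forces $f_P f_U^{-1}$ to represent a class in $\ker(\cup\alpha) \subset H^{1}(F_\wp, \mu_m)$.

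\textbf{Decomposition and simultaneous factorization.} Under the Rost $m$-divisibility hypothesis on $F_\wp$ (or, in the square-free index case, via the primary decomposition $\alpha = \sum_p \alpha_p$ combined with the prime-index Merkurjev--Suslin theorem applied to each component), one writes
\[ f_P f_U^{-1} = \left(\prod_{d \mid m} \Nrd(a_{d,\wp})^{d}\right) \cdot w_\wp^{m} \]
in $F_\wp^\times$, where $D_d/F$ is a fixed central simple algebra in the class of $d\alpha$, $a_{d,\wp} \in (D_d \otimes_F F_\wp)^\times$, and $w_\wp \in F_\wp^\times$. Since $\GL_1(D_d)$ and $\mathbb{G}_m$ are connected rational linear algebraic groups over $F$, the simultaneous factorization theorem of \cite{HHK09} yields $a_{d,\wp} = a_{d,U} \cdot a_{d,P}^{-1}$ and $w_\wp = w_U \cdot w_P^{-1}$ with $a_{d,\xi}, w_\xi$ defined over $F_\xi$, coherently over all branches in $\mathcal{B}$.

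\textbf{Correction, patching, and conclusion.} Setting $h_\xi := \prod_{d \mid m} \Nrd(a_{d,\xi})^{d} \cdot w_\xi^{m} \in F_\xi^\times$, multiplicativity of the reduced norm yields $h_U h_P^{-1} = f_P f_U^{-1}$ on each branch, so $f_U h_U = f_P h_P$ as elements of $F_\wp^\times$; meanwhile the projection formula $\alpha \cup \Nrd(a_{d,\xi})^{d} = d\alpha \cup \Nrd(a_{d,\xi}) = 0$ together with $m \cdot (\alpha \cup (w_\xi)) = 0$ forces $\alpha \cup (h_\xi) = 0$ in $H^{3}(F_\xi, \mu_m^{\otimes 2})$. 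The compatible family $(f_\xi h_\xi) \in \prod_\xi F_\xi^\times$ then descends to a global $f \in F^\times$ by HHK $\mathbb{G}_m$-patching, and since $\alpha \cup (f)$ and $\zeta$ agree in each $H^{3}(F_\xi, \mu_m^{\otimes 2})$, the injectivity of the $H^{3}$ Mayer--Vietoris map from the excerpt gives $\alpha \cup (f) = \zeta$. The main obstacle is the decomposition step: in the square-free index case, identifying $\ker(\cup\alpha)$ on $F_\wp^\times/(F_\wp^\times)^m$ as generated by reduced norms from each prime-power component of $\alpha$ must be handled carefully via Merkurjev--Suslin on each piece; the simultaneous factorization across multiple branches and multiple values of $d$ is also subtle, but is precisely what the HHK factorization apparatus for rational connected groups is built to handle.
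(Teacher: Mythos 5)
Your proposal is correct and is essentially the paper's argument: the paper packages your explicit correction-and-patch step as surjectivity of the map $\phi$ in the snake-lemma sequence attached to the two Mayer--Vietoris sequences, but the substantive inputs --- Rost $m$-divisibility (resp.\ Merkurjev--Suslin \cite[Theorem 12.2]{MS82} in the square-free index case) to describe $\ker(\alpha\cup\cdot)$ on each branch field, and HHK simultaneous factorization for the rational groups $\GL_1(D_{d\alpha})$ and $\mathbb{G}_m$ --- are identical. The one point you flag as delicate, the square-free index case, is handled in the paper by a single direct appeal to Merkurjev--Suslin (reduced norms exhaust $\ker(\alpha\cup\cdot)$ when $\ind(\alpha)$ is square-free) rather than a primary decomposition.
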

\begin{proof}

Consider the diagram obtained by the two Meyer-Vietoris sequences (see \cite[Theorem 3.1.3]{HHK15(2)}):

\begin{equation}
\label{MEYER-VIETORIS}
  \setlength{\arraycolsep}{1pt}
  \begin{array}{*{9}c}
     & & H^1(F, \mu_{m}) & \Lrightarrow & \prod_{\xi \in \mathcal{P} \cup \mathcal{U}}H^1(F_{\xi}, \mu_{m}) & \Lrightarrow & \prod_{\wp \in \mathcal{B}} H^1(F_{\wp}, \mu_{m}) & \Lrightarrow & 0\\
    & & \Ldownarrow{\alpha \cup {\cdot} } & & \Ldownarrow{\prod_{\xi}\alpha \cup \cdot} & & \Ldownarrow{\prod_{\wp}\alpha \cup \cdot} & & \\
    0 &\Lrightarrow & H^3(F, \mu_{m}^{\otimes 2} ) & \Lrightarrow & \prod_{\xi \in \mathcal{P} \cup \mathcal{U}} H^3(F_{\xi}, \mu_{m}^{\otimes 2}) & \Lrightarrow & \prod_{\wp \in \mathcal{B}} H^3(F_{\wp}, \mu_{m}^{\otimes 2}) & \Lrightarrow & 0. 
  \end{array}
\end{equation}

By the Snake Lemma, we obtain the following exact sequence:

\begin{align} 
\label{FUNDAMENTAL-SEQUENCE}
\Ker(\prod_{\xi \in \mathcal{P} \cup \mathcal{U}} \alpha \cup \cdot) \overset{\phi}\longrightarrow \Ker(\prod_{\wp \in \mathcal{B}}  \alpha \cup \cdot) \overset{\delta}{\longrightarrow} \frac{H^3(F, \mu_{m}^{\otimes 2} )}{\im(\alpha \cup \cdot)} \overset{\gl}{\longrightarrow} \prod_{\xi \in \mathcal{P} \cup \mathcal{U}} \frac{H^3(F_{\xi}, \mu_{m}^{\otimes 2})}{\im(\alpha \cup \cdot)}. 
\end{align}

Here $\delta$ is the connecting map. To prove our claim, we need to show that $\ker(\gl) = 0$. To establish this, it suffices to show that $\phi$ is surjective. 

For every $\wp$ in $\mathcal{B}$, let $(x_{\wp})$ be an element in $H^1(F_{\wp}, \mu_{m})$ such that $\alpha \cup (x_{\wp}) = 0$. For $d \geq 1$, let $D_{d\alpha}$ be the underlying division algebra of the Brauer class $d\alpha$. 

We will first assume that $F_{\wp}$ is Rost $m$-divisible. For every $d$ dividing $m$, there exist elements $\theta_{d, \wp}$ in $(D_{d\alpha}\otimes_{F} F_{\wp})^{\times}$ such that 
\[  x_{\wp} = \prod_{d | m} [\Nrd(\theta_{d, \wp})]^{d}{y_{\wp}}^{m},  \]
where $y_{\wp}$ is some element in $F_{\wp}^{\times}$. Since the group $\GL_{1}(D_{d\alpha})$ is a rational, by \cite[Theorem 3.6]{HHK09}, there exist elements $\theta_{d, U}$ in $(D_{d\alpha} \otimes_{F} F_{U})^{\times}$ and $\theta_{d, P}$ in $(D_{d\alpha} \otimes_{F} F_{P})^{\times}$ such that $\theta_{d, \wp} = \theta_{d, U} \theta_{d, P}^{-1}$ for every triple $(U, P, \wp)$. Also for every such triple, there exist elements $y_{U}$ in $F_{U}^{\times}$ and $y_{P}$ in $F_{P}^{\times}$ such that $y_{\wp} = y_{U}y_{P}^{-1}$ because $\mathbb{G}_{m}$ is rational. Consider the elements $x_{U}$ and $x_{P}$ respectively in $F_{U}^{\times}$ and $F_{P}^{\times}$:
\begin{align*} 
x_{U}:=& \prod_{d | m} [\Nrd(\theta_{d, U})]^{d}y_{U}^{m}\\
x_{P}:=& \prod_{d | m} [\Nrd(\theta_{d, P})]^{d}y_{P}^{m} .
\end{align*}
Because the reduced norm is multiplicative, we have $x_{\wp} = x_{U}x_{P}^{-1}$. Since $d\alpha \cup (\Nrd(\theta_{d, U})) = 0$, it follows that $\alpha \cup (x_{U}) = 0$. Similarly, $\alpha \cup (x_{P}) = 0$. This shows that the map $\phi$ in the exact sequence \eqref{FUNDAMENTAL-SEQUENCE} is surjective.

Finally we assume that $\ind(\alpha)$ is square-free. Since $\alpha \cup (x_{\wp}) = 0$, by \cite[Theorem 12.2]{MS82}, there exists an element $\theta_{\wp}$ in $(D \otimes_{F} F_{\wp})^{\times}$ such that $x_{\wp} = \Nrd(\theta_{\wp})$. Again by the rationality of $\GL_{1}(D_{\alpha})$, there exist elements $\theta_{U}$ and $\theta_{P}$ respectively in $(D \otimes_{F} F_{U})^{\times}$ and $(D \otimes_{F} F_{P})^{\times}$ such that $\theta_{\wp} = \theta_{U} \theta_{P}^{-1}$. Consider the elements $x_{U} : = \Nrd(\theta_{U})$ and $x_{P}: = \Nrd(\theta_{P})$. Note that, we have $x_{\wp} = x_{U}x_{P}^{-1}$, $\alpha \cup (x_{U}) = 0$ and $\alpha \cup (x_{P}) = 0$. Thus $\phi$ in the exact sequence \eqref{FUNDAMENTAL-SEQUENCE} is surjective.   
\end{proof}

One can also obtain a local to global principle for divisibility of Galois cohomology classes in $H^n$ with $\mu_2$ coefficients by those in $H^{n-1}$. We will need the Milnor conjecture (proved by Orlov-Vishik-Voevodsky in \cite{OVV07}) for this.

\begin{prop}
\label{LGP-WRT-PATCHES2}
For $n \geq 2$, let $\zeta$ be a class in $H^n(F, \mu_2)$ and $\alpha$ be a symbol in $H^{n-1}(F, \mu_2)$. If for every $\xi$ in $\mathcal{P} \cup \mathcal{U}$, we have $\zeta = \alpha \cup (f_{\xi})$ on $F_{\xi}$, then there exists an $f$ in $F^{\times}$ such that $\zeta = \alpha \cup (f)$ on $F$.
\end{prop}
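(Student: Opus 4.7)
The plan is to imitate the strategy of Proposition~\ref{LGP-WRT-PATCHES1}, with the role previously played by reduced norms of division algebras now taken by similitude multipliers (equivalently, represented values) of the Pfister form attached to the symbol $\alpha$.

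First, I would assemble the analogous commutative diagram of two Mayer--Vietoris sequences in degrees $n-1$ and $n$ with $\mu_{2}$ coefficients (\cite[Theorem~3.1.3]{HHK15(2)}), joined vertically by cup product with $\alpha$. The Snake Lemma yields an exact sequence
\[
\Ker\Bigl(\prod_{\xi\in\mathcal{P}\cup\mathcal{U}}\alpha\cup\cdot\Bigr) \xrightarrow{\phi} \Ker\Bigl(\prod_{\wp\in\mathcal{B}}\alpha\cup\cdot\Bigr) \xrightarrow{\delta} \frac{H^{n}(F,\mu_{2})}{\im(\alpha\cup\cdot)} \xrightarrow{\gl} \prod_{\xi}\frac{H^{n}(F_{\xi},\mu_{2})}{\im(\alpha\cup\cdot)},
\]
and the hypothesis forces $\zeta$ to lie in $\Ker(\gl)$, so it suffices to prove that $\phi$ is surjective.

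Next, write $\alpha = (a_{1})\cup\cdots\cup(a_{n-1})$ and let $\pi := \langle\langle a_{1},\ldots,a_{n-1}\rangle\rangle$ be the associated $(n-1)$-fold Pfister form. By the Milnor conjecture proved in \cite{OVV07}, for any overfield $L$ of $F$ and any $x\in L^{\times}$, the equation $\alpha\cup(x)=0$ in $H^{n}(L,\mu_{2})$ is equivalent to the $n$-fold Pfister form $\pi\otimes\langle\langle x\rangle\rangle$ being hyperbolic over $L$, which by the classical theory of Pfister forms is equivalent to $x\in D_{L}(\pi)$, the set of non-zero values represented by $\pi$. By multiplicativity of Pfister forms, $D_{L}(\pi)$ coincides with the group $G_{L}(\pi)$ of similitude factors.

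The key step is to realize $G(\pi)$ as the image on $F$-points of the multiplier map $\mu: H \to \mathbb{G}_{m}$ from a connected, $F$-rational linear algebraic group $H$; the natural choice is $H = \GO^{+}(\pi)$, whose rationality is classical (via the Cayley parametrization of the proper isometry group together with the Zariski-local splitting of $1\to\operatorname{O}^{+}(\pi)\to\GO^{+}(\pi)\to\mathbb{G}_{m}\to 1$). For each branch $\wp\in\mathcal{B}$, lift $x_{\wp}$ to an element $h_{\wp}\in H(F_{\wp})$ with $\mu(h_{\wp})=x_{\wp}$, which is possible precisely because $\pi$ represents $x_{\wp}$. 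By the factorization theorem for rational connected algebraic groups (\cite[Theorem~3.6]{HHK09}), one writes $h_{\wp}=h_{U}h_{P}^{-1}$ with $h_{U}\in H(F_{U})$ and $h_{P}\in H(F_{P})$; taking multipliers gives $x_{\wp}=x_{U}x_{P}^{-1}$ where $x_{U}:=\mu(h_{U})$ and $x_{P}:=\mu(h_{P})$, and automatically $\alpha\cup(x_{U})=\alpha\cup(x_{P})=0$ because similitude factors lie in $D(\pi)$. This produces the required surjectivity of $\phi$.

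The principal technical obstacle is justifying the rationality of $\GO^{+}(\pi)$ in sufficient generality, and securing the surjectivity of the multiplier onto $D_{L}(\pi)$ on $L$-points for each $L\in\{F_{\xi},F_{\wp}\}$. For $n-1\le 2$ this reduces to the familiar rationality of $\GL_{1}$ of a quadratic \'etale algebra or a quaternion algebra, already implicit in Proposition~\ref{LGP-WRT-PATCHES1}; for $n-1\ge 3$ no composition algebra is available and one must argue directly with the orthogonal similitude group of $\pi$. The degenerate case $\alpha=0$ should also be dispatched at the outset by taking $f=1$.
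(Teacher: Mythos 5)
Your overall strategy coincides with the paper's: the same Mayer--Vietoris/snake-lemma reduction to surjectivity of $\phi$ (note the top row of the diagram is in degree $1$, not $n-1$, though your displayed sequence is the right one), followed by the translation, via the Milnor conjecture and the Arason--Pfister Hauptsatz, of $\alpha\cup(x_{\wp})=0$ into $x_{\wp}\in G_{F_{\wp}}(q_{\alpha})=D_{F_{\wp}}(q_{\alpha})$, a lift of $x_{\wp}$ along the multiplier map of $\GO^{+}_{q_{\alpha}}$ (the paper adds the small point that one may need to compose with a reflection to land in the \emph{proper} similitude group), and a simultaneous factorization of the lift.

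The one genuine gap is exactly the point you flag: the rationality of $\GO^{+}(\pi)$ needed to invoke \cite[Theorem~3.6]{HHK09}. Your proposed route to it does not work: the sequence $1\to\Or^{+}(\pi)\to\GO^{+}(\pi)\to\mathbb{G}_{m}\to 1$ is not Zariski-locally split, since $\Or^{+}(\pi)$ is not special; indeed for $\pi$ anisotropic the generic fibre of the multiplier map has no rational point at all (an anisotropic form over $L$ never represents the variable $t$ over $L(t)$, by a degree count via Cassels--Pfister), so no rational section of $\mu$ exists even generically. The paper closes this gap differently: by \cite[Proposition~7]{Me96} the group $\GO^{+}_{q_{\alpha}}$ is connected and \emph{stably} rational, hence retract rational, and one then applies Krashen's factorization theorem \cite[Theorem~5.1.1]{K10}, which requires only retract rationality rather than the rationality demanded by \cite[Theorem~3.6]{HHK09}. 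With that substitution (Merkurjev for the group-theoretic input, Krashen for the factorization) your argument becomes the paper's proof.
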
  

\begin{proof}
Consider the following diagram similar to the one in \eqref{MEYER-VIETORIS}
\begin{equation}
\label{MEYER-VIETORIS1}
  \setlength{\arraycolsep}{1pt}
  \begin{array}{*{9}c}
     & & H^{1}(F, \mu_{2}) & \Lrightarrow & \prod_{\xi \in \mathcal{P} \cup \mathcal{U}}H^{1}(F_{\xi}, \mu_{2}) & \Lrightarrow & \prod_{\wp \in \mathcal{B}} H^{1}(F_{\wp}, \mu_{2}) & \Lrightarrow & 0\\
    & & \Ldownarrow{\alpha \cup {\cdot} } & & \Ldownarrow{\prod_{\xi}\alpha \cup \cdot} & & \Ldownarrow{\prod_{\wp}\alpha \cup \cdot} & & \\
    0 &\Lrightarrow & H^n(F, \mu_{2}) & \Lrightarrow & \prod_{\xi \in \mathcal{P} \cup \mathcal{U}} H^n(F_{\xi}, \mu_{2}) & \Lrightarrow & \prod_{\wp \in \mathcal{B}} H^n(F_{\wp}, \mu_{2}). 
  \end{array}
\end{equation}
By the Snake Lemma, we obtain the following exact sequence:

\begin{align} 
\label{FUNDAMENTAL-SEQUENCE1}
\Ker( \prod_{\xi \in \mathcal{P} \cup \mathcal{U}} \alpha \cup \cdot ) \overset{\phi}\longrightarrow \Ker( \prod_{\wp \in \mathcal{B}}  \alpha \cup \cdot ) \overset{\delta}{\longrightarrow} \frac{H^n(F, \mu_{2})}{\im(\alpha \cup \cdot)} \overset{\gl}{\longrightarrow} \prod_{\xi \in \mathcal{P} \cup \mathcal{U}} \frac{H^n(F_{\xi}, \mu_{2})}{\im(\alpha \cup \cdot)}. 
\end{align}

As in the proof of Proposition \ref{LGP-WRT-PATCHES1}, it suffices to show that $\phi$ is surjective. 

For every $\wp$ in $\mathcal{B}$, let $(x_{\wp})$ be an element in $H^1(F_{\wp}, \mu_2)$ such that $\alpha \cup (x_{\wp}) = 0$. Let $q_{\alpha}$ be an $(n-1)$-fold Pfister form in the class of a lift of $\alpha$ under the surjective map $I^{n-1}(F_{\wp}) \rightarrow H^{n-1}(F_{\wp}, \mu_2)$. Since $\alpha \cup (x_{\wp}) = 0$, and the kernel of the map $I^{n}(F_{\wp}) \rightarrow H^{n}(F_{\wp}, \mu_{2})$ is $I^{n+1}(F_{\wp})$, the class of the form $q_{\alpha} \otimes \langle 1, -x_{\wp} \rangle$ lies in $I^{n+1}(F_{\wp})$. But since the dimension of $q_{\alpha}\otimes \langle 1, -x_{\wp} \rangle$ is $2^{n}$, by the Arason-Pfister Hauptsatz (see \cite[Chapter X, Theorem 5.1]{Lm05}), we have $q_{\alpha} \cong x_{\wp}q_{\alpha}$. After composing the isometry by a reflection in the orthogonal group $\Or_{q_{\alpha}}$ if necessary, we see that $x_{\wp}$ is in the image of the multiplier map $\mu: \GO^{+}_{q_{\alpha}} \rightarrow \mathbb{G}_{m}$. Therefore there exists $g_{\wp}$ in $\GO_{q_{\alpha}}^{+}(F_{\wp})$ such that $\mu(g_{\wp}) = x_{\wp}$. 

By \cite[Proposition 7]{Me96}, the group $\GO_{q_{\alpha}}^{+}$ is connected and stably rational, and therefore also connected and retract rational. Thus by \cite[Theorem 5.1.1]{K10}, $\GO_{q_{\alpha}}^{+}$ satisfies the simultaneous factorization property, i.e., there exist elements $g_{U}$ and $g_{P}$ respectively in $\GO_{q_{\alpha}}^{+}(F_{U})$ and $\GO_{q_{\alpha}}^{+}(F_{P})$ such that $g_{\wp} = g_{U}g_{P}^{-1}$ for every triple $(U, P, \wp)$. Consider the elements $x_{U}: = \mu(g_{U})$ and $x_{P}:= \mu(g_{P})$. One therefore has that $x_{\wp} = x_{U}x_{P}^{-1}$. It remains to show that $\alpha \cup (x_{U}) = 0$ and $\alpha \cup (x_{P}) = 0$.

Since $x_{U}$ and $x_{P}$ are similitudes of $q_{\alpha}$ over $F_{U}$ and $F_{P}$, and since $q_{\alpha}$ is a Pfister form, $x_{U}$ and $x_{P}$ are values of $q_{\alpha} \otimes F_{U}$ and $q_{\alpha} \otimes F_{P}$. This implies that in the Witt rings $W(F_{U})$ and $W(F_{P})$, we have $q_{\alpha} \langle 1, -x_{U} \rangle = 0$ and $q_{\alpha} \langle 1, -x_{P} \rangle = 0$. Therefore $\alpha \cup (x_{U}) = 0$ and $\alpha \cup (x_{P}) = 0$. This shows that $\phi$ is surjective.
\end{proof}

We will need the following theorem, Gersten's conjecture for \'etale cohomology for Henselian two dimensional local rings, proved in \cite[Theorem 2.7]{S20}.
\begin{theorem}[Sakagaito]
	\label{SAKAGAITO}
	Let $R$ be a complete two dimensional regular local ring with fraction field $F$ and residue field $k$. Assume that $m$ is coprime to $\Char{k}$. We have the following exact sequence
	\[ 0 \rightarrow H^{i}_{\acute{e}t}(R, \mu_{m}^{\otimes j}) \rightarrow H^{i}(F, \mu_{m}^{\otimes j}) \overset{\oplus \partial_{\mathfrak{p}}}{\longrightarrow} \bigoplus_{\Ht(\mathfrak{p}) = 1} H^{i-1}(k(\mathfrak{p}), \mu_{m}^{\otimes j-1}) \rightarrow H^{i-2}(k, \mu_{m}^{\otimes j-2}) \rightarrow 0.  \]
\end{theorem}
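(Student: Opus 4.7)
The plan is to prove this as an instance of Gersten's conjecture for \'etale cohomology, via the Bloch--Ogus coniveau spectral sequence combined with Gabber's absolute cohomological purity. Letting $X = \Sp R$, since $m$ is invertible in $R$, Gabber's purity identifies the local cohomology at a codimension-$p$ point $x$ as $H^{n}_{x}(X, \mu_{m}^{\otimes j}) \cong H^{n-2p}(k(x), \mu_{m}^{\otimes j-p})$. The coniveau filtration of $H^{\ast}_{\acute{e}t}(X, \mu_{m}^{\otimes j})$ then yields a spectral sequence
\[
E_{1}^{p,q} = \bigoplus_{x \in X^{(p)}} H^{q-p}(k(x), \mu_{m}^{\otimes j-p}) \Longrightarrow H^{p+q}_{\acute{e}t}(X, \mu_{m}^{\otimes j}),
\]
and since $\dim X = 2$, only the columns $p = 0, 1, 2$ contribute. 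The row $q = i$ recovers exactly the three right-most terms of the claimed sequence, with differentials given by the Gersten residue maps. Since $R$ is Henselian with residue field $k$ of characteristic coprime to $m$, proper base change gives $H^{i}_{\acute{e}t}(R, \mu_{m}^{\otimes j}) \cong H^{i}(k, \mu_{m}^{\otimes j})$, which slots in as the first term of the asserted sequence.

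Next I would argue that the remaining content of the theorem is Gersten's conjecture in this bi-degree for $R$: the $q = i$ row of the $E_{1}$-page must have vanishing cohomology at $p = 1, 2$, and at $p = 0$ it should coincide with $H^{i}_{\acute{e}t}(R, \mu_{m}^{\otimes j})$. Surjectivity at the right end is the easier part. Fixing a regular system of parameters $(\pi, y)$ of $R$, the height-one prime $(y)$ has residue field $k(y)$ which is a complete discretely valued field with residue field $k$. Iterated application of the splitting \eqref{SPLIT-DECOMP-WITT} of the Witt exact sequence realises any prescribed class in $H^{i-2}(k, \mu_{m}^{\otimes j-2})$ as a double residue of a symbol in $H^{i}(F, \mu_{m}^{\otimes j})$ of the form $\alpha \cup (y) \cup (\pi)$ with $\alpha$ inflated from $k$. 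Injectivity of the map $H^{i}_{\acute{e}t}(R) \to H^{i}(F)$ and the identification of its image with the kernel of the total residue follow from standard absolute-purity arguments for regular schemes of dimension at most two.

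The hard part is exactness at the middle term, which is the genuine content of Gersten's conjecture here. My plan is to invoke the Cohen structure theorem to write $R \cong R_{0}[[y]]$, where $R_{0}$ is a complete one-dimensional regular local ring (a field or a discrete valuation ring), and to present $R$ as a filtered colimit of essentially smooth $R_{0}$-algebras, inheriting Gersten from the smooth case (Bloch--Ogus in equal characteristic, and its mixed-characteristic refinement due to Geisser). An alternative, more self-contained route adapted to this two-dimensional Henselian setting is a Mayer--Vietoris patching argument along the punctured spectrum $\Sp R \setminus \{\mathfrak{m}\}$, decomposed as the union of the opens $D(\pi)$ and $D(y)$; on each such open and on their intersection the Gersten complex has only two nonzero columns and its exactness reduces directly to the Witt exact sequence \eqref{WITT-EXACT-SEQUENCE}, so that gluing yields the desired middle exactness.
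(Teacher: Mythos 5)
First, a point of comparison: the paper does not prove this statement at all --- it is quoted directly from Sakagaito \cite[Theorem 2.7]{S20} (Gersten's conjecture for \'etale cohomology over two-dimensional Henselian regular local rings), so there is no internal argument to measure your proposal against. Your framing is nonetheless the right one: the coniveau spectral sequence with Gabber's purity identifies the asserted sequence as the augmented Gersten complex, the surjectivity at the right-hand end via a symbol of the form $\alpha\cup(y)\cup(\pi)$ together with the Witt exact sequence for the complete discretely valued field $k(\pi)$ is correct, and proper base change does give $H^{i}_{\acute{e}t}(R,\mu_{m}^{\otimes j})\cong H^{i}(k,\mu_{m}^{\otimes j})$. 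The problem is that the two steps you defer are exactly where the theorem lives, and neither of your proposed routes closes them.

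Concretely: (1) exactness at $H^{i}(F)$ and injectivity of $H^{i}_{\acute{e}t}(R)\to H^{i}(F)$ do not ``follow from standard absolute-purity arguments.'' In the coniveau spectral sequence the image of $H^{i}_{\acute{e}t}(R)$ in $H^{i}(F)$ is $E_{\infty}^{0,i}=\ker\bigl(d_{2}\colon E_{2}^{0,i}\to E_{2}^{2,i-1}\bigr)$ and its kernel is the coniveau piece $N^{1}H^{i}_{\acute{e}t}(R)$; identifying these with $E_{2}^{0,i}$ and with $0$ is precisely part of Gersten's conjecture, not a formal consequence of purity. (2) The Cohen structure theorem does not give $R\cong R_{0}[[y]]$ in general: in ramified mixed characteristic, e.g.\ $R=\mathbb{Z}_{p}[[x,y]]/(p-xy)$, which is a regular complete local UFD in which $p$ is a product of two non-associate primes and hence not a unit times a power of a single prime as it would be in any $V[[t]]$, no such presentation exists; and even where it does, Gersten for (colimits of) smooth algebras over a discrete valuation ring is itself a substantial theorem rather than something inherited formally from Bloch--Ogus. (3) The Mayer--Vietoris route along $D(\pi)\cup D(y)=\Sp R\setminus\{\mathfrak{m}\}$ cannot yield exactness at the middle term $\bigoplus_{\Ht(\mathfrak{p})=1}H^{i-1}(k(\mathfrak{p}))$, because that exactness is a statement about residues at the closed point $\mathfrak{m}$, which your cover omits; moreover the local rings of $D(\pi)$ at its closed points are discrete valuation rings that are not complete, so the Witt exact sequence \eqref{WITT-EXACT-SEQUENCE} does not apply to them directly. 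The honest course is to cite \cite{S20}, as the paper does (or, in the equicharacteristic case, Panin's proof of Gersten via Popescu's theorem); a self-contained proof in mixed characteristic is not supplied by these sketches.
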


\begin{prop}
\label{TWO-DIM-PROP}
Let $R$ be a complete two dimensional regular local ring with fraction field $F$, residue field $k$, and maximal ideal $\mathfrak{m} = (\pi, \delta)$. For $i \geq 2, \ j \geq 1$ and $m$ coprime to $\Char(k)$, let $\zeta$ be a class in $H^{i}(F, \mu_{m}^{\otimes j})$ and $\alpha$ be a class in $H^{i-1}(F, \mu_{m}^{\otimes j-1})$. Suppose that the ramification of $\zeta$ and $\alpha$ is supported along $\{ \pi, \delta \}$. If $\zeta = \alpha \cup (f_{\pi})$ in $H^{i}(F_{\pi}, \mu_{m}^{\otimes j})$, where $F_{\pi}$ is the completion of $F$ with respect to $\pi$, and $f_{\pi}$ is some element in $F_{\pi}^{\times}$, then there exists an $f$ in $F^{\times}$ such that $\zeta = \alpha \cup (f)$.
\end{prop}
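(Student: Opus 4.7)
The plan is to approximate $f_{\pi}$ by a suitable $f_{0} \in F^{\times}$ so that $\zeta = \alpha \cup (f_{0})$ already holds in $H^{i}(F_{\pi}, \mu_{m}^{\otimes j})$, and then use Sakagaito's Theorem \ref{SAKAGAITO} together with the vanishing of $\zeta - \alpha \cup (f_{0})$ over $F_{\pi}$ to show that the difference vanishes already over $F$.

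\textbf{Approximation.} Iterated Hensel's lemma (applicable since $m$ is invertible in $R$) shows that every class in $F_{\pi}^{\times}/(F_{\pi}^{\times})^{m}$ is represented by some $u\pi^{a}\delta^{b}$ with $u \in R^{\times}$. Indeed, $\widehat{R_{(\pi)}}^{\times}/(\widehat{R_{(\pi)}}^{\times})^{m} \cong k(\pi)^{\times}/(k(\pi)^{\times})^{m}$, and $k(\pi) = \mathrm{Frac}(R/(\pi))$ is a complete discretely valued field with uniformizer $\bar\delta$ and residue field $k$, so each class decomposes as $\bar u\, \bar\delta^{b}$ with $\bar u$ lifted from $R^{\times}$. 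Choosing $f_{0} = u\pi^{a}\delta^{b}$ with $f_{0}/f_{\pi} \in (F_{\pi}^{\times})^{m}$ gives $\zeta = \alpha \cup (f_{0})$ over $F_{\pi}$, and I set $\xi := \zeta - \alpha \cup (f_{0}) \in H^{i}(F,\mu_{m}^{\otimes j})$. Since $\alpha$, $(f_{0})$, and $\zeta$ are each unramified outside $\{(\pi),(\delta)\}$, so is $\xi$; since $\xi|_{F_{\pi}} = 0$, also $\partial_{\pi}(\xi) = 0$. Hence $\xi$ can only be ramified at $(\delta)$.

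\textbf{Correcting the $\delta$-residue.} Apply Theorem \ref{SAKAGAITO} to $\xi$: the reciprocity map $\bigoplus_{\mathfrak{p}} H^{i-1}(k(\mathfrak{p}),\mu_{m}^{\otimes j-1}) \to H^{i-2}(k,\mu_{m}^{\otimes j-2})$ annihilates the residue vector of $\xi$, whose only possibly nonzero entry is at $(\delta)$. Since $k(\delta)$ is a complete discretely valued field with residue field $k$, this forces $\partial_{\delta}(\xi) = \Inf_{k \to k(\delta)}(\bar\eta)$ for some $\bar\eta \in H^{i-1}(k,\mu_{m}^{\otimes j-1})$. Lifting $\bar\eta$ via $H^{i-1}(k) \cong H^{i-1}_{\acute{e}t}(R,\mu_{m}^{\otimes j-1}) \hookrightarrow H^{i-1}(F,\mu_{m}^{\otimes j-1})$ produces a class $\tilde\eta$ that is unramified at every height-one prime, and the standard residue formula gives $\partial_{\delta}(\tilde\eta \cup (\delta)) = \tilde\eta|_{k(\delta)} = \Inf(\bar\eta)$. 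Hence $\xi - \tilde\eta \cup (\delta)$ is unramified everywhere and, by Theorem \ref{SAKAGAITO} again, equals $\Inf(\gamma)$ for some $\gamma \in H^{i}(k,\mu_{m}^{\otimes j})$.

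\textbf{Vanishing.} Restricting the identity $\xi = \tilde\eta \cup (\delta) + \Inf(\gamma)$ to $F_{\pi}$ and using the iterated Witt decomposition
\[ H^{i}(F_{\pi}, \mu_{m}^{\otimes j}) \cong H^{i}(k) \oplus H^{i-1}(k) \oplus H^{i-1}(k) \oplus H^{i-2}(k) \]
(splitting first via the uniformizer $\pi$ of $F_{\pi}$, then via the uniformizer $\bar\delta$ of $k(\pi)$), the class $\Inf(\gamma)$ contributes $\gamma$ in the first summand, while $\tilde\eta \cup (\delta)$, being unramified at $\pi$ and equal in $H^{i}(k(\pi))$ to $\Inf_{k \to k(\pi)}(\bar\eta) \cup (\bar\delta)$, contributes $\bar\eta$ in the second summand (precisely because $\bar\delta$ is a uniformizer of $k(\pi)$). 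The vanishing $\xi|_{F_{\pi}} = 0$ thus forces both $\gamma = 0$ and $\bar\eta = 0$ independently, whence $\xi = 0$ and $\zeta = \alpha \cup (f_{0})$. The main subtlety is this final step: ensuring $\tilde\eta \cup (\delta)$ and $\Inf(\gamma)$ land in distinct Witt summands of $H^{i}(F_{\pi})$ is what allows the hypothesis at the single patch $F_\pi$ to control both the residual $\bar\eta$ at $(\delta)$ and the étale class $\gamma$ over $R$, and this hinges on $\delta$ specialising to a uniformizer rather than a unit of $k(\pi)$.
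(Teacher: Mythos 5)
Your proof is correct and follows essentially the same route as the paper's: Hensel approximation of $f_{\pi}$ by an element of the form $u\pi^{a}\delta^{b}$ with $u\in R^{\times}$, Sakagaito's exact sequence to write the difference class as $\Inf(\gamma)+\tilde\eta\cup(\delta)$ with $\gamma$, $\bar\eta$ coming from $k$, and the Witt decomposition of $H^{i}(k(\pi),\mu_m^{\otimes j})$ with respect to the uniformizer $\bar\delta$ to conclude that both vanish. The paper's proof is the same argument with only cosmetic differences in how the final splitting is phrased.
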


\begin{proof}
 We may assume that $f_{\pi}$ is an element of the ring of integers $R_{\pi}$ of $F_{\pi}$. Therefore $f_{\pi} = u \pi^r$ where $u$ is a unit in $R_{\pi}$, and $r \geq 0$ some integer. Let $k(\pi)$ denote the residue field of $R_{\pi}$. Note that $k(\pi)$ is a complete discretely valued field with parameter $\overline{\delta}$ and residue field $k$. We therefore have $\overline{u} = \overline{w} \overline{\delta}^{s}$, where $\overline{w}$ is a unit in the ring of integers $k[\pi]$ of $k(\pi)$. Let $\overline{\overline{w}}$ be the reduction of $\overline{w}$ to the residue field $k$ of $k[\pi]$, and let $w$ be the lift of $\overline{\overline{w}}$ to $F$. Since $m$ is coprime to $\Char(k)$, by Hensel's Lemma, we have $(u) = (w \delta^{s})$ in $H^1(F_{\pi}, \mu_{m})$. Thus in $H^1(F_{\pi}, \mu_{m})$, we obtain that $(f_{\pi}) = (w \pi^r \delta^s)$. Set $f := w \pi^r \delta^s$. Note that this is an element in $F^{\times}$.
 
Now consider the element $\widetilde{\zeta} := \zeta - \alpha \cup (f)$. Note that since $\zeta$, $\alpha$ and $(f)$ are ramified along $\{\pi, \delta\}$, it follows that $\widetilde{\zeta}$ is ramified along $\{ \pi, \delta \}$ and $\widetilde{\zeta} \otimes F_{\pi} = 0$ in $H^{i}(F_{\pi}, \mu_{m}^{\otimes j})$. We claim that $\widetilde{\zeta} = 0$. 

Since the residue map $\partial_{\pi}: H^{i}(F, \mu_{m}^{\otimes j}) \rightarrow H^{i-1}(k(\pi), \mu_{m}^{\otimes j-1})$ factors through $H^{i}(F_{\pi}, \mu_{m}^{\otimes j})$, we get that $\partial_{\pi}(\widetilde{\zeta}) = 0$. By Theorem \ref{SAKAGAITO}, we have $\partial_{\overline{\pi}}(\partial_{\delta}(\widetilde{\zeta})) = - \partial_{\overline{\delta}}(\partial_{\pi}(\widetilde{\zeta})) = 0$, i.e., $\partial_{\delta}(\widetilde{\zeta})$ maps to zero under the residue map $\partial_{\overline{\pi}}: H^{i -1}(k(\delta), \mu_{m}^{\otimes j - 1}) \rightarrow H^{i-2}(k, \mu_{m}^{\otimes j - 2})$. Thus, by the exact sequence \eqref{WITT-EXACT-SEQUENCE}, $\partial_{\delta}(\widetilde{\zeta})$ comes from $H^{i-1}(k, \mu_{m}^{\otimes j}) \cong H^{i-1}_{\acute{e}t}(R, \mu_{m}^{\otimes j})$. This isomorphism follow from the proper base change theorem (see \cite[Chapter VI, Corollary 2.7]{Mi80}). We abuse notation and consider $\beta : = \partial_{\delta}(\widetilde{\zeta})$ to be an element in $H^{i-1}_{\acute{e}t}(R, \mu_{m}^{\otimes j-1}) \hookrightarrow H^{i-1}(F, \mu_{m}^{\otimes j-1})$. Now notice that $\partial_{\delta}(\widetilde{\zeta} - \beta \cup (\delta)) = 0$. Note that since $\beta$ comes from $H^{i-1}_{\acute{e}t}(R, \mu_{m}^{\otimes j-1})$, the class $\beta \otimes F_{\pi}$ comes from $H^{i-1}_{\acute{e}t}(R_{\pi}, \mu_{m}^{\otimes j-1})$, and as a result $\partial_{\pi}(\beta \cup (\delta)) = 0$. This means that $\partial_{\pi}(\widetilde{\zeta} - \beta \cup (\delta)) = 0$ Thus $\widetilde{\zeta} - \beta \cup (\delta)$ is unramified at all height one primes, by the exactness of the sequence in Theorem \ref{SAKAGAITO}, $\widetilde{\zeta} = \zeta_{0} + \beta \cup (\delta)$, where $\zeta_{0}$ is in $H^{i}_{\acute{e}t}(R, \mu_{m}^{\otimes j})$. Further, since $\widetilde{\zeta} \otimes F_{\pi} = 0$, it comes from $H^{i}(k(\pi), \mu_{m}^{\otimes j})$, and the reduction $\overline{\widetilde{\zeta}}$ is trivial. Since $k(\pi)$ is a complete discretely valued field with residue field $k$ and parameter $\overline{\delta}$, and since $\overline{\widetilde{\zeta}} = \overline{\zeta_{0}} + \overline{\beta}\cup (\overline{\delta}) = 0$, by the decomposition \eqref{SPLIT-DECOMP-WITT}, it follows that $\overline{\zeta_{0}} = 0$ and $\overline{\beta} = 0$. Finally since we have the isomorphism $H^{i}_{\acute{e}t}(R, \mu_{m}^{\otimes j}) \cong H^{i}(k, \mu_{m}^{\otimes j})$, we get that $\zeta_{0}$ and $\beta$ are trivial. Thus $\widetilde{\zeta} = 0$ and hence $\zeta = \alpha \cup(f)$.
\end{proof}
We are now in a position to prove the local-global principle for divisibility with respect to discrete valuations. We will first obtain the local-global principle with respect to points on the special fiber of any normal model of $F$. 

We will say that local-global principle for divisibility of a class $\zeta$ in $H^{i}(F, \mu_{m}^{\otimes j})$ by a class $\alpha$ in $H^{i-1}(F, \mu_{m}^{\otimes j-1})$ holds with respect to patches if the conclusion of Propositions \ref{LGP-WRT-PATCHES1} and \ref{LGP-WRT-PATCHES2} hold. 
\begin{prop}
\label{LGP-WRT-P}
Let $F$ be a semi-global field with residue field $k$, and let $\zeta$ be a class in $H^i(F, \mu_{m}^{\otimes j})$ and $\alpha$ be in $H^{i-1}(F, \mu_{m}^{\otimes j-1})$ for integers $i \geq 2, j \geq 1$ and $m$ coprime to $\Char(k)$. Suppose that local global principle for divisibility of $\zeta$ by $\alpha$ holds with respect to patches. If $\zeta = \alpha \cup (f_{P})$ for every point $P$ in the special fibre $\mathscr{X}_{k}$ of a normal projective model $\mathscr{X}$ of $F$, then  there exists an $f$ in $F^{\times}$ such that $\zeta = \alpha \cup (f)$.
\end{prop}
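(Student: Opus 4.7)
The plan is to reduce the problem to the patching local-global principle of Propositions \ref{LGP-WRT-PATCHES1} and \ref{LGP-WRT-PATCHES2}, so it suffices to produce, for some choice of $\mathcal{P}$, elements $f_\xi \in F_\xi^\times$ with $\zeta = \alpha \cup (f_\xi)$ on $F_\xi$ for every $\xi \in \mathcal{P} \cup \mathcal{U}$. For $\xi = P \in \mathcal{P}$ this is precisely the hypothesis applied to the closed point $P$, so the real work is to produce $f_U$ for each $U \in \mathcal{U}$.

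First I would replace $\mathscr{X}$ by a regular projective model (available by Lipman's resolution for excellent two-dimensional schemes) and, after further blowups, arrange that the divisor $\mathscr{X}_k \cup \ram_\mathscr{X}(\zeta) \cup \ram_\mathscr{X}(\alpha)$ has strict normal crossings. I would then take $\mathcal{P}$ to contain every closed point of $\mathscr{X}_k$ at which two or more components of this divisor meet, together with the finitely many points required by the patching setup. With this choice, for any $U \in \mathcal{U}$ and any closed point $P \in U$, the complete local ring $\widehat{R_P}$ is regular two-dimensional with maximal ideal of the form $(t,\delta)$, where $\overline{U}$ is locally cut out by $\delta$, and $\zeta, \alpha$ are unramified at every height-one prime of $\widehat{R_P}$ other than possibly $(\delta)$.

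For each $U \in \mathcal{U}$, pick such a closed point $P \in U$ and invoke the hypothesis to obtain $f_P \in F_P^\times$ with $\zeta = \alpha \cup (f_P)$. Writing $f_P = u\, t^a \delta^b$ with $u \in \widehat{R_P}^\times$ and $a,b$ integers, I would use the restricted ramification of $\zeta$ and $\alpha$ together with the Witt sequence \eqref{WITT-EXACT-SEQUENCE}, Hensel's lemma, and Theorem \ref{SAKAGAITO}, in the spirit of the proof of Proposition \ref{TWO-DIM-PROP}, to show that after multiplying $f_P$ by an $m$-th power in $F_P^\times$ we may take $f_P$ to lie in the subring $\widehat{R_U} \subset \widehat{R_P}$ (using the natural continuous map coming from $R_U \hookrightarrow R_P$ and the fact that the $t$-adic topology on $R_P$ is coarser than the $\mathfrak{m}_P$-adic one). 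The resulting element $f_U \in F_U^\times$ should then satisfy $\zeta = \alpha \cup (f_U)$ on all of $F_U$, an equality one verifies by using the unramifiedness of $\zeta - \alpha \cup (f_U)$ at every height-one prime of $\widehat{R_U}$ except possibly the one cut out by $\delta$, together with the Sakagaito sequence applied at the other closed points of $\overline{U}$. Once each $f_U$ is in hand, Propositions \ref{LGP-WRT-PATCHES1} and \ref{LGP-WRT-PATCHES2} produce the desired $f \in F^\times$.

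The main obstacle I anticipate is the step of descending $f_P$ from $F_P$ to the smaller subfield $F_U$ and then confirming the divisibility $\zeta = \alpha \cup (f_U)$ over the whole patch $F_U$ rather than only after base change to $F_P$. The normal-crossings arrangement and the resulting control on ramification should reduce this to an essentially unramified cohomology statement over $\widehat{R_U}$, for which the two-dimensional Gersten-type exact sequence of Theorem \ref{SAKAGAITO} provides exactly the rigidity needed.
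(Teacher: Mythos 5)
Your overall skeleton (reduce to the patching statement, use the hypothesis at closed points to handle $\mathcal{P}$, and treat the components $U\in\mathcal{U}$ separately) matches the paper, but the way you produce the elements $f_U$ has a genuine gap. You take a \emph{single} closed point $P\in U$, descend $f_P$ to some $f_U\in F_U^\times$, and then want to conclude $\zeta=\alpha\cup(f_U)$ over all of $F_U$ from the facts that $\zeta-\alpha\cup(f_U)$ dies over $F_P$ and is unramified at the height-one primes of $\widehat{R_U}$. No tool in the paper gives this: Theorem \ref{SAKAGAITO} applies to complete two-dimensional \emph{regular local} rings, whereas $\widehat{R_U}$ is a two-dimensional semilocal-type ring with infinitely many maximal ideals, and the restriction $H^{i}(F_U,\mu_m^{\otimes j})\to H^{i}(F_P,\mu_m^{\otimes j})$ is far from injective even on classes unramified over $\widehat{R_U}$ --- such classes essentially live on the whole affine curve $U$, and vanishing at one closed point does not force vanishing. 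Put differently, to land in $H^i(F_U,\cdot)$ via the available machinery ([HHK15(2), Proposition 3.2.2]) you need vanishing over the completion $F_\eta$ at the \emph{generic} point $\eta$ of the component containing $U$, and nothing in your argument supplies that.

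The fix is to use the part of the hypothesis you discarded: the special fibre $\mathscr{X}_k$ also contains the generic points $\eta$ of its irreducible components, and for these the hypothesis gives $\zeta=\alpha\cup(f_\eta)$ over the completion $F_\eta$ of $F$ at the corresponding divisorial valuation. This is exactly how the paper proceeds: by weak approximation choose one $f\in F^\times$ with $(f)=(f_\eta)$ in $H^1(F_\eta,\mu_m)$ for all (finitely many) such $\eta$, so $\zeta-\alpha\cup(f)$ vanishes over each $F_\eta$; then [HHK15(2), Proposition 3.2.2] yields affine opens $U\subseteq\overline{\{\eta\}}$ with $\zeta=\alpha\cup(f)$ over $F_U$, and the complement $\mathcal{P}$ of these opens is handled by the hypothesis at closed points, after which the patching local-global principle concludes. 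Note also that no regularization or normal-crossings reduction is needed for this proposition (the model is only assumed normal), and, as a minor point, your topological comparison is reversed: since $(t^n)\subseteq\mathfrak{m}_P^n$, the $t$-adic topology on $R_P$ is finer, not coarser, than the $\mathfrak{m}_P$-adic one (the map $\widehat{R_U}\to\widehat{R_P}$ for $P\in U$ exists all the same).
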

\begin{proof}
Let $\eta$ be a codimension one point of the special fibre $\mathscr{X}_{k}$. By hypothesis, $\zeta = \alpha \cup (f_{\eta})$ on the completion $F_{\eta}$ of $F$ for some $f_{\eta}$ in $F_{\eta}^{\times}$. Using weak approximation, one may choose an $f$ in $F^{\times}$ such that $(f) = (f_{\eta})$ in $H^{1}(F_{\eta}, \mu_{m})$ for every codimension one point $\eta$ of $\mathscr{X}_{k}$. Note that $\zeta - \alpha \cup (f) = 0$ in $H^i(F_{\eta}, \mu_{m}^{\otimes j})$. By \cite[Proposition 3.2.2]{HHK15(2)} there exists an affine open set $U \subseteq \overline{\{ \eta \}}$ which does not meet any other irreducible component of $\mathscr{X}_{k}$ such that $\zeta = \alpha \cup (f)$ in $H^{i}(F_U, \mu_{m}^{\otimes j})$. Let $\mathcal{P}$ be the complement in $\mathscr{X}$ of the union of these open sets $U$. By our assumption, for each $P$ in $\mathcal{P}$, there exist $f_{P}$ in $F_{P}^{\times}$ such that $\zeta = \alpha \cup (f_{P})$. Since $\zeta$ and $\alpha$ satisfy local global principle for divisibility with respect to patches, there exists an $f$ in $F^{\times}$ such that $\zeta = \alpha \cup (f)$.
\end{proof}

\begin{theorem}
\label{LGP-WRT-DVR}
Let $F$ be a semi-global field with residue field $k$, and let $\zeta$ be a class in $H^{i}(F, \mu_{m}^{\otimes j})$, and $\alpha$ be in $H^{i-1}(F, \mu_{m}^{\otimes j-1})$, for integers $i \geq 2, j \geq 1$ and $m$ coprime to $\Char(k)$. Suppose that local global principle for divisibility of $\zeta$ by $\alpha$ holds with respect to patches. If for every discrete valuation $v$ of $F$, there exist $f_{v}$ in the completion $F_{v}^{\times}$ such that $\zeta = \alpha \cup (f_{v})$, then there exists an $f$ in $F^{\times}$ such that $\zeta = \alpha \cup (f)$.
\end{theorem}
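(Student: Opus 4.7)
The plan is to reduce to Proposition~\ref{LGP-WRT-P} by producing, at every closed point $P$ of the special fibre of a suitably chosen normal model of $F$, an element $f_{P} \in F_{P}^{\times}$ with $\zeta = \alpha \cup (f_{P})$. The bridge between the hypothesis (divisibility at all discrete valuations) and this per-point divisibility is Proposition~\ref{TWO-DIM-PROP}, which promotes divisibility at a single height-one prime of the complete local ring $\widehat{R_{P}}$ into divisibility over the entire fraction field $F_{P}$.

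First I would refine the model. Starting from any normal projective model of $F$ over $\Sp(R)$, Lipman's resolution of singularities for excellent two-dimensional schemes together with successive blow-ups produces a regular projective model $\mathscr{X}$ on which the divisor $D := \ram_{\mathscr{X}}(\zeta) \cup \ram_{\mathscr{X}}(\alpha) \cup \mathscr{X}_{k}$ has strict normal crossings. At each closed point $P$ of $\mathscr{X}_{k}$ one can then fix a regular system of parameters $(\pi, \delta)$ of $\widehat{R_{P}}$ so that every component of $D$ through $P$ is cut out by $\pi$ or $\delta$; since $P \in \mathscr{X}_{k}$, one arranges that $\pi$ defines a component of $\mathscr{X}_{k}$ through $P$.

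Next, for each such $P$, let $\eta$ be the generic point of the component of $\mathscr{X}_{k}$ cut out by $\pi$. Being a codimension-one point of $\mathscr{X}$, it determines a discrete valuation $v$ on $F$, and the hypothesis yields $f_{v} \in F_{v}^{\times}$ with $\zeta = \alpha \cup (f_{v})$ in $H^{i}(F_{v}, \mu_{m}^{\otimes j})$. The $\pi$-adic valuation on $F_{P}$ restricts to $v$ on $F$, so one obtains a natural inclusion $F_{v} \hookrightarrow (F_{P})_{\pi}$; pushing $f_{v}$ through this inclusion yields an element $f_{\pi} \in (F_{P})_{\pi}^{\times}$ with $\zeta = \alpha \cup (f_{\pi})$ in $H^{i}((F_{P})_{\pi}, \mu_{m}^{\otimes j})$. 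The SNC hypothesis guarantees that the ramification of $\zeta$ and $\alpha$ considered over $F_{P}$ is supported on $\{(\pi), (\delta)\}$, so Proposition~\ref{TWO-DIM-PROP} applied to $R = \widehat{R_{P}}$ with parameters $\pi, \delta$ supplies an $f_{P} \in F_{P}^{\times}$ with $\zeta = \alpha \cup (f_{P})$ in $H^{i}(F_{P}, \mu_{m}^{\otimes j})$. Proposition~\ref{LGP-WRT-P} then combines these local divisibilities into the desired $f \in F^{\times}$.

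The most delicate step is ensuring that the SNC hypothesis needed for Proposition~\ref{TWO-DIM-PROP} survives the passage from the local ring $R_{P}$ to its completion $\widehat{R_{P}}$. Concretely, one must verify that no new ramification of $\zeta$ or $\alpha$ appears along height-one primes of $\widehat{R_{P}}$ outside $\{(\pi), (\delta)\}$. This follows from the SNC set-up combined with standard purity: components of $D$ through $P$ are regular at $P$ and hence stay irreducible in $\widehat{R_{P}}$, while off $D$ both classes come from \'etale cohomology classes that remain unramified at every height-one prime of $\widehat{R_{P}}$ disjoint from $D$. Degenerate cases (fewer than two components of $D$ through $P$, or $\zeta$ and $\alpha$ unramified at $P$) cause no real trouble, as Proposition~\ref{TWO-DIM-PROP} still applies with the relevant hypotheses vacuously satisfied.
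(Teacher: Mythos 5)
Your proposal is correct and follows essentially the same route as the paper: resolve so that the ramification divisor has normal crossings, use the hypothesis at a divisorial valuation through each closed point $P$ together with Proposition \ref{TWO-DIM-PROP} applied to $\widehat{R_{P}}$ to obtain divisibility over $F_{P}$, and conclude via Proposition \ref{LGP-WRT-P}. The only cosmetic differences are that you fold $\mathscr{X}_{k}$ into the SNC divisor and delegate the codimension-one points of the special fibre entirely to Proposition \ref{LGP-WRT-P}, whereas the paper handles them explicitly by weak approximation before invoking that proposition.
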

\begin{proof}
Let $\mathscr{X}/ \Sp{R}$ be a regular proper model of $F$. Using the embedded resolution of singularities (see \cite[Chapter 9, Theorem 2.26]{Li02}), we may assume that $\ram_{\mathscr{X}}(\zeta) \cup \ram_{\mathscr{X}}(\alpha)$ is a normal crossing divisor. Let $\{ v_1, \cdots v_n \}$ be the discrete valuations respectively corresponding to the irreducible components $\{ X_1, \cdots X_n \}$ of the special fiber $\mathscr{X}_{k}$. By assumption, there exist $f_{v_{i}}$ in $F_{v_{i}}^{\times}$ such that $\zeta = \alpha \cup (f_{v_{i}})$ in $H^{i}(F_{v_{i}}, \mu_{m}^{\otimes j})$. By weak approximation, there exists an $f$ in $F^{\times}$ such that $(f) = (f_{v_{i}})$ in $H^{1}(F_{v_{i}}, \mu_{m})$ for every $i = 1, \cdots, n$. Therefore we have that $\zeta - \alpha \cup (f) = 0$ in $H^{i}(F_{v_{i}}, \mu_{m}^{\otimes j})$ for every $i$. By \cite[Proposition 3.2.2]{HHK15(2)}, there exists an affine open subset $U_{i}$ of $X_i$ (not meeting any other irreducible component) such that $\zeta = \alpha \cup (f)$ on $H^{i}(F_{U_{i}}, \mu_{m}^{\otimes j})$ for every $i$. 

Let $\mathcal{P}$ be the remaining finite set of closed points in $\mathscr{X}_{k} \setminus \cup_{i=1}^{n} U_{i}$ including the points where distinct irreducible components of $\mathscr{X}_{k}$ meet, and at least one closed point in every irreducible component of $\mathscr{X}_{k}$. Let $P$ be in $\mathcal{P}$. Since $\ram_{\mathscr{X}}(\zeta) \cup \ram_{\mathscr{X}}(\alpha)$ is a normal crossing divisor, the ramification divisor of $\zeta$ and $\alpha$ (viewed as classes in $F_{P}$) $\ram_{\widehat{R_{P}}}(\zeta) \cup \ram_{\widehat{R_{P}}}(\alpha)$ is supported along a regular system of parameters $\{ \pi, \delta \}$ of $\widehat{R_{P}}$. Let $\mathfrak{p} := (\pi)\cap \mathcal{O}_{\mathscr{X}, P}$ be the contraction of the prime ideal $(\pi)$. The prime ideal $\mathfrak{p}$ gives a discrete valuation $v$ of $F$. Note that the completion $F_{v}$ is contained in the completion $F_{P, \pi}$ of $F_P$ at $\pi$. Since $\zeta = \alpha \cup (f_{v})$ for some $f_v$ in $F_{v} \subset F_{P, \pi}$, we have that $\zeta = \alpha \cup (f_{v})$ also in $H^{i}(F_{P, \pi}, \mu_{m}^{\otimes j})$. Because $\zeta$ and $\alpha$ are unramified everywhere except possibly at $\pi$ and $\delta$, by Proposition \ref{TWO-DIM-PROP}, there exists an $f_{P}$ in $F_{P}^{\times}$ such that $\zeta = \alpha \cup (f_{P})$ on $H^{i}(F_{P}, \mu_{m}^{\otimes j})$ for every $P$ in $\mathcal{P}$. Therefore by Proposition \ref{LGP-WRT-P}, we conclude that there exists an $f$ in $F^{\times}$ such that $\zeta = \alpha \cup (f)$.
\end{proof}

\begin{corollary}
\label{DIVISIBILITY}
Let $F$ be a semi-global field, and let $\zeta$ be a class in $H^{i}(F, \mu_{m}^{\otimes j})$ and a non-zero class $\alpha$ in $H^{i-1}(F, \mu_{m}^{\otimes j-1})$, where $i, j >0$ and $m$ is an integer coprime to $\Char(k)$. Suppose that one of the following condition holds:
\begin{enumerate}
\item $i=3$, $j=2$ and every branch field $F_{\wp}$ of $F$ is Rost $m$-divisible.
\item $i = 3$, $j = 2$ and $\ind(\alpha)$ is square-free.
\item $i \geq 2$, $m = 2$, and $\alpha$ is a symbol. 
\end{enumerate}
If for every discrete valuation $v$, there exists an $f_{v}$ in $F_{v}^{\times}$ such that $\zeta = \alpha \cup (f_{v})$ on $H^{i}(F_{v}, \mu_{m}^{\otimes j})$, then there exists an $f$ in $F^{\times}$ such that $\zeta = \alpha  \cup (f)$.
\end{corollary}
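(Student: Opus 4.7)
The plan is simply to check that in each of the three listed situations, the local-global principle with respect to patches, which is the standing hypothesis of Theorem \ref{LGP-WRT-DVR}, has already been verified earlier in the paper; once that is done, the corollary drops out of Theorem \ref{LGP-WRT-DVR} with no further work. So the task reduces to matching each of the three cases to the appropriate previous proposition.

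First I would handle cases (1) and (2) simultaneously. Here $i=3$ and $j=2$, so a class in $H^{i-1}(F,\mu_{m}^{\otimes j-1}) = H^{2}(F,\mu_{m})$ is an element of ${}_{m}\br(F)$; it therefore has both a well-defined index and well-defined Rost divisibility behavior on the branch fields $F_{\wp}$. Case (1) matches the first alternative in the hypothesis of Proposition \ref{LGP-WRT-PATCHES1} (every $F_{\wp}$ is Rost $m$-divisible) and case (2) matches the second (square-free index of $\alpha$). In either subcase, Proposition \ref{LGP-WRT-PATCHES1} tells us precisely that whenever $\zeta\otimes F_{\xi}$ is divisible by $\alpha$ for every $\xi\in\mathcal{P}\cup\mathcal{U}$, there is a global $f\in F^{\times}$ with $\zeta=\alpha\cup(f)$; in other words the local-global principle for divisibility of $\zeta$ by $\alpha$ with respect to patches is valid.

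Next for case (3), where $i\ge 2$, $m=2$, and $\alpha$ is a symbol in $H^{i-1}(F,\mu_2)$, I would invoke Proposition \ref{LGP-WRT-PATCHES2} directly, which is formulated exactly for this setting and yields the patch-level local-global principle for divisibility of $\zeta\in H^{i}(F,\mu_2)$ by the symbol $\alpha$.

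Having verified in each case that local-global divisibility of $\zeta$ by $\alpha$ holds with respect to patches, I would then apply Theorem \ref{LGP-WRT-DVR}. The assumption in the corollary that $\zeta=\alpha\cup(f_v)$ on every completion $F_v$ is precisely the input that Theorem \ref{LGP-WRT-DVR} needs, and its conclusion is precisely the desired global statement $\zeta=\alpha\cup(f)$ in $H^{i}(F,\mu_{m}^{\otimes j})$. There is no real obstacle here beyond bookkeeping: all the technical content (the Mayer--Vietoris snake-lemma arguments in Propositions \ref{LGP-WRT-PATCHES1} and \ref{LGP-WRT-PATCHES2}, together with the weak-approximation and two-dimensional patching arguments in Proposition \ref{LGP-WRT-P} and Theorem \ref{LGP-WRT-DVR}) is already in place, so the proof is essentially a one-line citation splitting into three cases.
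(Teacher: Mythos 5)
Your proof is correct and is essentially identical to the paper's own argument: the paper likewise notes that by Theorem \ref{LGP-WRT-DVR} it suffices to verify the local-global principle with respect to patches, and then cites Proposition \ref{LGP-WRT-PATCHES1} for cases (1) and (2) and Proposition \ref{LGP-WRT-PATCHES2} for case (3).
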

\begin{proof}
By Theorem \ref{LGP-WRT-DVR}, it is enough to show that $\zeta$ and $\alpha$ satisfy local global principle with respect to patches. If $i =3$, $j =2$ and $F_{\wp}$ is Rost $m$-divisible or $\ind(\alpha)$ is square-free, this follows from Proposition \ref{LGP-WRT-PATCHES1}. If $m =2$ and $\alpha$ is a symbol, this follows from Proposition \ref{LGP-WRT-PATCHES2}.
\end{proof}

\comm{
\begin{theorem}
\label{KATO-GENERAL}
Let $F$ be a semi-global field with residue field $k$, and $m$ be an integer coprime to $\Char(k)$. If for every prime $\ell$ dividing $m$, the cohomological $\ell$-dimension $\cd_{\ell}(k) \leq i-2$, then $H^{i}_{\nr}(F, \mu_{m}^{\otimes i-1}) = 0$.
\end{theorem}
\begin{proof}
Let $\mathscr{X}/ \Sp{R}$ be a regular projective model of $F$, and let $\{ X_1, \cdots, X_n \}$ be the irreducible components of the special fiber $\mathscr{X}_{k}$, and $\{v_1, \cdots, v_n\}$ be the corresponding discrete valuations. 

Let $\zeta$ be a class in $H^{i}_{\nr}(F, \mu_{m}^{\otimes i-1})$. Since the class $\zeta \otimes F_{v_{i}}$ in $H^{i}(F_{v_{i}}, \mu_{m}^{\otimes i-1})$ is unramified, by the exact sequence \eqref{WITT-EXACT-SEQUENCE}, there exists a unique class $\overline{\zeta}$ in $H^{i}(k(X_{i}), \mu_{m}^{\otimes i-1})$ which maps to $\zeta \otimes F_{v_{i}}$ under the inflation map. Since $\cd_{\ell}(k) \leq i-2$, by \cite[Chapter II, Proposition 11]{Se97}, $\cd_{\ell}(k(X_{i})) \leq i-1$ for every $\ell$ dividing $m$. As a result, $\overline{\zeta} = 0$ and so $\zeta \otimes F_{v_{i}} = 0$. By \cite[Proposition 3.2.2]{HHK15(2)}, there exists an affine open subset $U_{i}$ of $X_{i}$ such that $\zeta \otimes F_{U_{i}} = 0$ for every $i = 1, \cdots, n$.

Let $\mathcal{P}$ be the set of closed points in $\mathscr{X}_{k} \setminus \cup_{i=1}^{n}U_{i}$ including at least one point on each irreducible component of $\mathscr{X}_{k}$, and where distinct irreducible components of $\mathscr{X}_{k}$ meet. For every $P$ in $\mathcal{P}$, the class $\zeta \otimes F_{P}$ is unramified on $\widehat{R_{P}}$ with respect to all its height one primes. Therefore by \cite[Chapter VI, Corollary 2.7]{Mi80}, there exists a class $\overline{\zeta}$ in $H^{i}(k(P), \mu_{m}^{\otimes i-1})$ which maps to $\zeta \otimes F_{P}$ under the inflation map. Since for every $\ell$ dividing $m$, $\cd_{\ell}(k)$ is at most $i-2$, we see that $\zeta \otimes F_{P} = 0$. By the local global principle for Galois cohomology classes with respect to patches (see \cite[Theorem 3.1.5]{HHK15(2)}), it follows that $\zeta = 0$. 
\end{proof}
}
\section{Proof of the main theorem}
Before we prove Theorem \ref{MAIN-THM2}, we prove the following lemma, most probably well known to experts.
\begin{lemma}
	\label{WELL-KNOWN-LEMMA}
	Let $R$ be a complete two dimensional regular local ring with fraction field $F$ and residue field $k$ and maximal ideal $\mathfrak{m} = (\pi, \delta)$. Suppose that $m > 1$ is an integer coprime to $\Char{k}$. For every prime $\ell$ dividing $m$ suppose that $\cd_{\ell}(k) \leq i-2$.  If $\zeta$ is a class in $H^{i}(F, \mu_{m}^{\otimes j})$ which is possibly ramified only along $\pi$, then in fact $\zeta = 0$.
\end{lemma}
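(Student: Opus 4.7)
The plan is to apply Theorem \ref{SAKAGAITO} to $R$ and then use the cohomological dimension hypothesis on $k$ to kill every residual term. Recall that since $\cd_{\ell}(k) \leq i-2$ for every prime $\ell$ dividing $m$, decomposing an $m$-torsion Galois module into its $\ell$-primary components gives $H^{n}(k, \mu_{m}^{\otimes s}) = 0$ for every $n > i-2$ and every $s$. This uniform vanishing will collapse the argument.

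Next I would examine $\partial_{\mathfrak{p}}(\zeta)$ for each height one prime $\mathfrak{p}$ of $R$. By hypothesis the only possible nonzero residue occurs at $\mathfrak{p} = (\pi)$, so the image of $\zeta$ under $\oplus\, \partial_{\mathfrak{p}}$ in Sakagaito's sequence is the single class $\partial_{\pi}(\zeta) \in H^{i-1}(k(\pi), \mu_{m}^{\otimes j-1})$. By exactness at the middle term, this class must lie in the kernel of the subsequent boundary map to $H^{i-2}(k, \mu_{m}^{\otimes j-2})$. Now $R/(\pi)$ is a complete discrete valuation ring with uniformizer $\overline{\delta}$ and residue field $k$, so $k(\pi)$ is a complete discretely valued field with residue field $k$; the restriction of the Sakagaito boundary to the $(\pi)$-component coincides with the Witt residue $\partial_{\overline{\delta}}$, consistent with the two-step formula $\partial_{\overline{\pi}}\partial_{\delta} = -\partial_{\overline{\delta}}\partial_{\pi}$ used in the proof of Proposition \ref{TWO-DIM-PROP}. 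Applying the Witt exact sequence \eqref{WITT-EXACT-SEQUENCE} to $k(\pi)$, we see that $\partial_{\pi}(\zeta)$ lifts to a class in $H^{i-1}(k, \mu_{m}^{\otimes j-1})$, which vanishes by the cohomological dimension bound. Hence $\partial_{\pi}(\zeta) = 0$, so $\zeta$ is unramified at every height one prime of $R$.

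Returning to Sakagaito's sequence, $\zeta$ then comes from $H^{i}_{\acute{e}t}(R, \mu_{m}^{\otimes j})$. By proper base change (\cite[Chapter VI, Corollary 2.7]{Mi80}) this étale cohomology group is isomorphic to $H^{i}(k, \mu_{m}^{\otimes j})$, which is again zero by the cohomological dimension hypothesis, so $\zeta = 0$. The only delicate point, more of a compatibility check than a substantive obstacle, is the identification of the final boundary in Theorem \ref{SAKAGAITO} with the second Witt residue $\partial_{\overline{\delta}}$ on the $(\pi)$-component; this is built into Sakagaito's construction and matches the residue computation already invoked in Proposition \ref{TWO-DIM-PROP}, so no new work is required.
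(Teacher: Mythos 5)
Your proposal is correct and follows essentially the same route as the paper: use the hypothesis to reduce to checking that $\partial_{\pi}(\zeta)=0$, deduce from the Sakagaito sequence (equivalently, the anticommutation $\partial_{\overline{\delta}}\partial_{\pi}=-\partial_{\overline{\pi}}\partial_{\delta}$ used in the paper) that $\partial_{\pi}(\zeta)$ is unramified over $k(\pi)$, kill it via the Witt exact sequence and $\cd_{\ell}(k)\leq i-2$, and finish with proper base change. The only cosmetic difference is that you invoke exactness of the middle of Sakagaito's sequence where the paper writes out the two-step residue identity explicitly; these are the same fact.
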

\begin{proof}
	Note that by the proper base change theorem (see \cite[Chapter VI, Corollary 2.7]{Mi80}), we have $H^{i}_{\acute{e}t}(R, \mu_{m}^{\otimes j}) \cong H^{i}(k, \mu_{m}^{\otimes j})$. Since $\cd_{\ell}(k) \leq i -2$ for every prime $\ell$ dividing $m$, it follws that $H^{i}(k, \mu_{m}^{\otimes j}) = 0$. By Theorem \ref{SAKAGAITO}, suffices to show that $\zeta$ is unramified at every height one prime of $R$. By the hypothesis, we only need to check that $\zeta$ is unramified along $(\pi)$. By the exact sequence in Theorem \ref{SAKAGAITO}, we have $\partial_{\overline{\delta}}(\partial_{\pi}(\zeta)) = - \partial_{\overline{\pi}}(\partial_{\delta}(\zeta)) = 0$. Note that the residue field $k(\pi)$ is a complete discretely valued field with parameter the reduction $\overline{\delta}$ and residue field $k$. The class $\partial_{\pi}(\zeta)$ lies in $H^{i-1}(k(\pi), \mu_{m}^{\otimes j-1})$ and is unramified. Therefore by the Witt exact sequence (see \eqref{WITT-EXACT-SEQUENCE}), $\partial_{\pi}(\zeta)$ comes from $H^{i-1}(k ,\mu_{m}^{\otimes j-1})$. But since $\cd_{\ell}(k) \leq i-2$ for every $\ell$ dividing $m$, we also have $H^{i-1}(k, \mu_{m}^{\otimes j-1}) = 0$, and hence $\partial_{\pi}(\zeta) = 0$. This shows that $\zeta = 0$.
\end{proof}

\begin{proof}[\textbf{Proof of Theorem \ref{MAIN-THM2}}]:
	Note that every branch field $F_{\wp}$ of every normal model of $F$ is a complete discretely valued field with residue field a (positive characteristic) local field. In view of \cite[Theorem 4.12]{PPS18}, $F_{\wp}$ is Rost $m$-divisible. By Corollary \ref{DIVISIBILITY}, for every $\zeta$ in $H^{3}(F, \mu_{m}^{\otimes 2})$, and for every discrete valuation $v$ of $F$, it suffices to show that there exists a symbol algebra $\alpha$ in $H^{2}(F, \mu_{m})$ such that $\zeta = \alpha \cup (g_{v})$ in $H^{3}(F_{v}, \mu_{m}^{\otimes 2})$. 
	
	Let $\mathscr{X}/\Sp{R}$ be a regular projective model of $F$ such that $\ram_{\mathscr{X}}(\zeta)$ is a normal crossing divisor in $\mathscr{X}$ (see \cite[Chapter 9, Theorem 2.26]{Li02}). By blowing up if necessary, we may also assume that each irreducible component of (the support of) this divisor is a regular curve. Let $\{ C_{1}, \cdots, C_{n} \}$ be the irreducible components of $\Supp(\ram_{\mathscr{X}}(\zeta))$.
	
	Let $\mathcal{P}$ be the set of self intersection points of $\Supp(\ram_{\mathscr{X}}(\zeta))$. Since $\mathscr{X}$ is projective (in particular, quasi-projective), there exists an affine open set $\Sp(A)$ containing all points in $\mathcal{P}$ including at least one point in every connected component of $\Supp(\ram_{\mathscr{X}}(\zeta))$. Semi-localizing $A$ at the points in $\mathcal{P}$, we obtain a semi-local ring $B$. Since $\Pic(B)$ is trivial, the divisors $C_{i}$, when restricted to $\Sp(B)$ are given by some functions $\pi_{i}$ in $F$ respectively. As a result, $\Div_{\mathscr{X}}(\pi_{i}) = C_{i} + E_{i}$, where $E_{i}$ does not pass through any point in $\mathcal{P}$. 
	
	Note that the function fields $k(C_{i})$ of $C_{i}$ are either (positive characteristic) global fields or non-archimedean local fields.
	Let $P$ be a point in $\mathcal{P} \cap C_{i}$.  Note that $\chi_{i, P}: = \partial_{P}(\partial_{C_{i}}(\zeta))$ is the residue of the Brauer class $\partial_{C_{i}}(\zeta)$ at the point $P$. Since $m$ is coprime to $\Char{k}$, for the largest $2^{n}$ dividing $m$, the field extension $k(C_{i})(\mu_{2^{n}})/k(C_{i})$ is cyclic. By the Grunwald-Wang theorem (see for eg. \cite[Theorem 1]{LR99}), there exists $\chi_{i}$ in $H^{1}(k(C_{i}), \mathbb{Z}/m\mathbb{Z})$ such that $\chi_{i}$ induces $\chi_{i, P}$ at every $P$ in $\mathcal{P}\cap C_{i}$. Observe that since $\ram_{\mathscr{X}}(\zeta)$ is a normal crossing divisor, $P$ lies on at most one another irreducible component $C_{j}$.
	
	If $P$ also lies on $C_{j}$, then by the exact sequence in Theorem \ref{SAKAGAITO}, we have $\partial_{P}(\partial_{C_{i}}(\zeta)) + \partial_{P}(\partial_{C_{j}}(\zeta)) = 0$. Therefore $\chi_{j, P}: = \partial_{P}(\partial_{C_{j}}(\zeta)) = - \chi_{i, P}$. If $P$ is not on any other $C_{j}$, then $\partial_{P}(\partial_{C_{i}}(\zeta)) = 0$. Thus the Brauer class $\partial_{C_{i}}(\zeta)$ is only possibly ramified at the points in $\mathcal{P} \cap C_{i}$. Let $L_{i}/k(C_{i})$ be the extension given by the character $\chi_{i}$. Note that since $L_{i}/k(C_{i})$ splits the residues of $\partial_{C_{i}}(\zeta)$, we have that $\partial_{C_{i}}(\zeta) \otimes_{k(C_{i})}L_{i} = 0$, and thus $\partial_{C_{i}}(\zeta) = (\chi_{i}, a_{i})$ for some $a_{i}$ in $k(C_{i})^{\times}$.
	
	Let $v_{i}$ be the discrete valuation corresponding to $C_{i}$. Observe that the parameters at the completions $F_{v_{i}}$ are given by $\pi_{i}$. Note that $\chi_{i}$ can be thought of as elements in $H^{1}(F_{v_{i}}, \mathbb{Z}/m\mathbb{Z})$. Again by the Grunwald-Wang theorem (\cite[Theorem 1]{LR99}), there exists $\chi$ in $H^{1}(F, \mathbb{Z}/m\mathbb{Z})$ such that $\chi$ induces $\chi_{i}$ at $F_{v_{i}}$. 
	
	We set $f: = \prod_{i=1}^{n}\pi_{i}$ and set $\alpha: = (\chi, f)$.
	We claim that there exists a $g$ such that $\zeta = \alpha \cup (g)$. As noted before by Corollary \ref{DIVISIBILITY}, it suffices to show that for every discrete valuation $v$, we have $\zeta \otimes F_{v} = (\chi, f) \cup (g_{v})$ for some $g_{v}$ in $F_{v}$.
	
	Let $v$ be a discrete valuation of $F$. Suppose that the center of $v$ on $\mathscr{X}$ is a point $P$ outside $\ram_{\mathscr{X}}(\zeta)$. Note that $F_{P} \subseteq F_{v}$. Since $\zeta \otimes F_{P}$ is unramified with respect to every height one prime ideal of $\widehat{R_{P}}$, and since $\cd_{\ell}(k(P)) \leq 2$ for every prime $\ell$ dividing $m$, by Theorem \ref{SAKAGAITO}, $\zeta \otimes F_{P} = 0$ and therefore $\zeta \otimes F_{v} = 0$. 
	
	Now suppose that the center $P$ of $v$ on $\mathscr{X}$ is the generic point of $C_{i}$. Note that $\partial_{v}((\chi, f) \cup (a_{i}^{-1})) = \partial_{C_{i}}((\chi_{i}, w\pi_{i})\cup (a_{i}^{-1})) $, where $w$ is a unit in $F_{v}^{\times}$. We also abusively denote the lift of $(a_{i})$ to $F_{v}$ by $(a_{i})$. Further we have, 
	\begin{align*}
	\partial_{C_{i}}((\chi_{i}, w\pi_{i})\cup (a_{i}^{-1})) &= \partial_{C_{i}}((\chi_{i}, a_{i}) \cup (w\pi_{i})) \\
	              &= (\chi_{i}, a_{i}) \\
	              &= \partial_{C_{i}}(\zeta).
	\end{align*}
    Thus we see that $\partial_{C_{i}}(\zeta \otimes F_{v} - (\chi, f) \cup (a_{i}^{-1})) = 0$. Since $\cd(k(C_{i})) \leq 2$, by the Witt Exact sequence (see \eqref{WITT-EXACT-SEQUENCE}, Subsection \ref{GALOIS-COHOMOLOGY}), it follows that $\zeta \otimes F_{v} = (\chi, f) \cup (a_{i}^{-1})$.
    
    Now suppose that the center of $v$ is a codimension two point $P$ in $\ram_{\mathscr{X}}(\zeta)$. Suppose that $P$ lies in exactly one $C_{i}$. We claim that $\zeta \otimes F_{P} = 0$. Since $P$ does not lie in any other $C_{j}$, we have $\partial_{P}(\partial_{C_{i}}(\zeta)) = 0$. Let $\mathfrak{p}$ be a height one prime ideal in $\widehat{R_{P}}$ defining $C_{i}$. Note that $\partial_{P}(\partial_{\mathfrak{p}}(\zeta \otimes F_{P})) = 0$ and $\zeta \otimes F_{P}$ is unramified at every other height one prime ideal. Thus by Lemma \ref{WELL-KNOWN-LEMMA}, we obtain that $\zeta \otimes F_{P} = 0$. Since $F_{P} \subset F_{v}$, we have $\zeta \otimes F_{v} = 0$ and therefore $\zeta \otimes F_{v} = (\chi, f) \cup (1)$.  
    
    Finally suppose that the center of $v$ is a codimension two point $P$ in $C_{i} \cap C_{j}$. Note that on $F_{P}$, we have $(\chi, f) \cup (\pi_{i}) = (\chi, w\pi_{i}\pi_{j}) \cup (\pi_{i})$, where $w$ is a unit in $\widehat{R_{P}}$.
    \begin{align*}
    	(\chi, f) \cup (\pi_{i}) &= (\chi, w \pi_{i}\pi_{j}) \cup (\pi_{i}) \\
    							&= (\chi, -w) \cup (\pi_{i}) + (\chi, \pi_{j}) \cup (\pi_{i}). 
    \end{align*}
Since $\chi$ is unramified on $F_{P}$, and $-w$ is a unit in $\widehat{R_{P}}$, $(\chi, - w) = 0$. Thus on $F_{P}$, we have $(\chi, f) \cup (\pi_{i}) = (\chi, \pi_{j}) \cup (\pi_{i})$. We set $\zeta^{\prime} : = (\chi, \pi_{j})\cup (\pi_{i})$. Note that $\partial_{\overline{\pi_{j}}}(\partial_{\pi_{i}}(\zeta^{\prime} \otimes F_{P})) = \partial_{\overline{\pi_{j}}}(\chi_{i}, \overline{\pi_{j}}) = \chi_{i, P} $. Thus $\partial_{\overline{\pi_{j}}}(\partial_{\pi_{i}}(\zeta \otimes F_{P})) = \partial_{\overline{\pi_{j}}}(\partial_{\pi_{i}}(\zeta^{\prime} \otimes F_{P}))$. Since the residue field $k(\pi_{i})$ of $\widehat{R_{P}}$ at $\pi_{i}$ is a non-archimedean local field, $\partial_{\pi_{i}}(\zeta \otimes F_{P}) = \partial_{\pi_{i}}(\zeta^{\prime} \otimes F_{P})$. Similarly one sees that $\partial_{\pi_{j}}(\zeta \otimes F_{P}) = \partial_{\pi_{j}}(\zeta^{\prime} \otimes F_{P})$. Since $\zeta$ is unramified on $\widehat{R_{P}}$ at any other height one prime ideal $\mathfrak{p}$ of $\widehat{R_{P}}$, we have $\partial_{\mathfrak{p}}(\zeta^{\prime}) = 0$. Thus $\zeta \otimes F_{P} - \zeta^{\prime} \otimes F_{P}$ is unramifed at every height one prime ideal of $\widehat{R_{P}}$ and hence $\zeta \otimes F_{P} = \zeta^{\prime} \otimes F_{P} = (\chi, f) \cup (\pi_{i})$. Since $F_{P} \subseteq F_{v}$, we obtain that $\zeta \otimes F_{v} = (\chi, f) \cup (\pi_{i})$. This finishes the proof.	
\end{proof}

\bibliographystyle{amsart}

\end{document}